\newif\ifbw
\newtheorem{theorem}{Theorem}[section]
\newtheorem{lemma}[theorem]{Lemma}
\newtheorem{proposition}[theorem]{Proposition}
\newtheorem{conjecture}[theorem]{Conjecture}
\newtheorem{statement}[theorem]{Statement}
\newtheorem{corollary}[theorem]{Corollary}
\newtheorem*{proposition*}{Proposition}
\newtheorem*{conjecture*}{Conjecture}
\newtheorem*{theorem*}{Theorem}
\newcommand{\nesetril}{Ne\v set\v ril}
\newcommand{\kk}{\mathbf{k}}
\renewcommand{\ll}{\mathbf{l}}
\newcommand{\LL}{\mathbf{L}}
\newcommand{\mm}{\mathbf{m}}
\newcommand{\MM}{\mathbf{M}}
\newcommand{\nn}{\mathbf{n}}
\renewcommand{\ss}{\mathbf{s}}
\DeclareMathOperator{\Proj}{\mathrm{Proj}}
\title{Ramsey properties of products of chains}
\author{Csaba Bir\'o}
\address{Department of Mathematics, University of Louisville, Louisville, KY 40292, USA}
\email{csaba.biro@louisville.edu}
\author{Sida Wan}
\address{Department of Mathematics, University of Louisville, Louisville, KY 40292, USA}
\email{sida.wan@louisville.edu}
\begin{document}

\begin{abstract}
Let $\kk$ denote the totally ordered set (or chain) on $k$ elements. The product $\kk^t=\kk\times\cdots\times\kk$ is a poset called a grid. This paper discusses several loosely related results on the Ramsey theory of grids. Most of the results involve some application of the Product Ramsey Theorem.
\end{abstract}

\keywords{poset, Ramsey, grid, planar}
\subjclass[2020]{06A07}

\maketitle

\section{Motivation}
The original motivation of this paper was two questions of \nesetril\ and Pudl\'ak from 1986. In their paper \cite{NesetrilPudlak}, they introduced the notion of the Boolean dimension of partially ordered sets (see the first paragraphs of Section~\ref{s:np}). They proved an upper and a lower bound based on the number of points of the poset. At the end of their note they asked two questions. They did not voice their thoughts about which way the questions would go; nevertheless we rephrase the questions as ``statements'' for easier discussion.
\begin{statement}\label{sta:np1}
The Boolean dimension of planar posets is unbounded.
\end{statement}

Note that this is the opposite of what many researchers, possibly including \nesetril\ and Pudl\'ak, conjectured. E.g.\ in \cite{local}, the authors state that it is clear from the presentation of the question in \cite{NesetrilPudlak} that they believed the answer should be ``no''.

We will say that a class $\mathcal{C}$ of posets has the Ramsey-property, if for all $r$ and $P\in\mathcal{C}$, there is a poset $Q\in\mathcal{C}$, such that for every $r$-coloring of the comparabilities of $Q$, there is a subposet $Q'$ of $Q$ that is isomorphic to $P$ such that every comparability of $Q'$ is of the same color.

\nesetril\ and R\"odl \cite{NesetrilRodl} proved that the class of all posets has the Ramsey property. (In fact, this is just a consequence of their much stronger theorem.) Later we will show that this special consequence is also a rather direct consequence of the so-called Product Ramsey Theorem.

\nesetril\ and Pudl\'ak asked a second question in their paper, which we also phrase as a statement.

\begin{statement}\label{sta:np2}
The class of planar posets has the Ramsey property.
\end{statement}

\nesetril\ and Pudl\'ak pointed out that Statement~\ref{sta:np2} implies Statement~\ref{sta:np1}, though they did not include the proof in their short article.
We provide a proof of this implication in Section~\ref{s:np}. We do this, after we set up the basic framework of the Ramsey-property for relational sets, and we prove a useful general lemma in Section~\ref{s:relational}.

We take a slight detour in Section~\ref{s:planar} to discuss some related statements for planar graphs.

When we started to work on present research, we guessed both Statement~\ref{sta:np1} and \ref{sta:np2} are false. Since we did not know how to approach the more general question, we decided we would attempt to disprove Statement~\ref{sta:np2}. One natural way to do that would be to construct a planar poset $P$ for which there is not an appropriate $Q$ required by the Ramsey-property. A simple choice for a planar poset would be a 2-dimensional grid. However, we quickly realized that that is not a good example. In fact, we proved the following.

\begin{proposition*}
For each $t$ positive integer, the class of $t$-dimensional grids have the Ramsey Property.
\end{proposition*}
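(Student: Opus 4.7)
The plan is to reduce the proposition to the Product Ramsey Theorem via the following key observation: \emph{if a subposet of $\nn^t$ is isomorphic to $\kk^t$ and its projection to each coordinate axis is injective, then every comparability inside it is strict in every coordinate}. Totally strict comparabilities of $\nn^t$ are in canonical bijection with $\binom{[n]}{2}^t$ (the comparability $x<y$ being recorded by the unordered pair $\{x_i,y_i\}$ in each coordinate), so the Product Ramsey Theorem will apply to them directly.

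First, I would apply the Product Ramsey Theorem with $k_i=2$ in every coordinate and target size $l=k^t$: for $n$ sufficiently large in terms of $r$, $t$, and $k$, any $r$-coloring of $\binom{[n]}{2}^t$ will admit sets $L_1,\ldots,L_t\in\binom{[n]}{l}$ for which $\binom{L_1}{2}\times\cdots\times\binom{L_t}{2}$ is monochromatic in some color $c$. Translating back, every totally strict comparability of the rectangular sub-grid $L_1\times\cdots\times L_t\subseteq\nn^t$ receives color $c$.

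Next, I would exhibit an order-embedding $\Phi\colon\kk^t\hookrightarrow L_1\times\cdots\times L_t$ whose projection to each coordinate is injective. After identifying each $L_i$ with $[k^t]$ in the unique order-preserving way, a convenient choice sends $(a_1,\ldots,a_t)\in[k]^t$ to the point whose $i$th coordinate is the cyclic base-$k$ encoding
\[
f_i(a_1,\ldots,a_t)=(a_i-1)k^{t-1}+(a_{i+1}-1)k^{t-2}+\cdots+(a_{i-1}-1)+1,
\]
with indices read modulo $t$. Each $f_i$ is monotone non-decreasing in every argument, so $\Phi$ is an order-embedding, and each $f_i$ is injective on $[k]^t$ by uniqueness of the base-$k$ representation, so each coordinate projection of $\Phi(\kk^t)$ is injective.

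Combining the two steps, every comparability in $\Phi(\kk^t)$ is totally strict by the key observation and therefore receives color $c$, producing the monochromatic copy of $\kk^t$ required by the Ramsey property. I expect the main obstacle to be the construction in the second step, namely writing down an order-embedding of $\kk^t$ into a rectangular sub-grid with all coordinate projections simultaneously injective; once this is in hand, the application of the Product Ramsey Theorem is immediate.
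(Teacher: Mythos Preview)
Your proposal is correct and follows essentially the same approach as the paper: color the $\two^t$ subgrids of $\nn^t$ by the colour of their min--max comparability, apply the Product Ramsey Theorem with $m=2$ and $l=k^t$, and then embed $\kk^t$ into the monochromatic subgrid so that all coordinate projections are injective (what the paper calls a \emph{casual embedding}). The only difference is cosmetic: the paper invokes the existence of a casual embedding abstractly via a realizer of $\kk^t$, whereas you write one down explicitly using the cyclic base-$k$ encoding.
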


As we will see in Section~\ref{s:grid}, this proposition is a relatively simple consequence of the Product Ramsey Theorem, and easily implies that the class of all posets has the Ramsey-property.

We considered developing a tool that would be more powerful than the Product Ramsey Theorem. The result of this attempt is the following conjecture.

\begin{conjecture*}
For all $t$, $r$, $m$, and $l$ there exists an $n$ such that for all $r$-colorings of the $\mm^t$ subposets of $\nn^t$, there is a monochromatic $\ll^t$ subposet $L$. That is, every $\mm^t$ subposet of $L$ receives the same color.
\end{conjecture*}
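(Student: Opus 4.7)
My plan is to prove the conjecture by reducing it, via the Product Ramsey Theorem, to a base case where the result is classical. For $t = 1$, a subposet of $\nn$ isomorphic to $\mm$ is just an $m$-element subset of $[n]$, and the statement becomes the classical Ramsey theorem: for any $r$-coloring of $\binom{[n]}{m}$, there exists $A \in \binom{[n]}{l}$ all of whose $m$-subsets receive the same color. One can take $n$ to be the usual Ramsey number.

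For $t \ge 2$, the delicate point is that a subposet of $\nn^t$ isomorphic to $\mm^t$ need not be a product $A_1 \times \cdots \times A_t$ of $m$-chains. Already for $t = m = 2$, four points in $[n]^2$ with the comparability pattern of $\two^2$ can have up to four distinct first coordinates and four distinct second coordinates, so one cannot parametrise the $\mm^t$-subposets by $\binom{[n]}{m}^t$ and quote the Product Ramsey Theorem verbatim. To sidestep this, I would attach to each $\mm^t$-subposet $S$ a combinatorial \emph{type} $\tau(S)$ which records, for each coordinate $i \in \{1,\ldots,t\}$, the weak linear order on the $m^t$ points of $S$ induced by their $i$-th coordinate values. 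There are only finitely many types (the count depends on $m$ and $t$ but not on $n$), and for each fixed type $\tau$ the collection of type-$\tau$ subposets of $\nn^t$ is naturally parametrised by a product $\binom{[n]}{k_1^\tau} \times \cdots \times \binom{[n]}{k_t^\tau}$, where $k_i^\tau$ is the number of equivalence classes of the $i$-th weak order in $\tau$. Applying the Product Ramsey Theorem one type at a time and intersecting the resulting product sub-grids should yield a product sub-grid $L \subseteq \nn^t$, isomorphic to $\mathbf{N}^t$ for some large $N$, on which, for every type $\tau$ simultaneously, all type-$\tau$ $\mm^t$-subposets of $L$ share a common color $c_\tau$.

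The decisive, and I expect hardest, step is then to collapse the colors $c_\tau$ across different types to a single color. A typical $\ll^t$-subposet of $L$ contains $\mm^t$-subposets of many different types, so if the $c_\tau$ do not all agree, monochromaticity is not yet achieved. One route is to iterate: re-interpret the induced coloring on $L$ by ``type-color data'' as a new instance of the conjecture at a coarser scale, and apply the Product Ramsey Theorem again; another is to combine with the \nesetril--R\"odl theorem for finite posets, which supplies a Ramsey witness for $\ll^t$ inside the class of all posets, and then to realise this witness as a subposet of some grid $\nn^t$. The main obstacle is that the \nesetril--R\"odl witness need not itself be a grid, while the conjecture insists the outer poset be one; closing this gap, that is, forcing the cross-type collapse to happen inside the class of grids, is where I expect the proof either to succeed by a clever iteration of Product Ramsey or to require a genuinely new idea.
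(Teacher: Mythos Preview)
The statement you are attempting to prove is stated in the paper as a \emph{conjecture}, not a theorem; the paper does not prove it in full generality. Only the case $t=2$ is established (Theorem~\ref{diamond}), and the paper explicitly remarks that the techniques used there ``heavily rely on the fact that $t=2$'' and that the key lemmas fail for $t\ge 3$. So there is no ``paper's proof'' of the full statement to compare against.

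Your plan accurately locates the real difficulty. Steps 1--3 are sound: one can indeed stratify the $\mm^t$-subposets by their coordinate-wise weak-order type and iterate the Product Ramsey Theorem over the finitely many types to obtain a large subgrid on which each type separately is monochromatic with color $c_\tau$. The gap is exactly your step 4, collapsing the $c_\tau$ across types, and you are right to flag it as the place where the argument may ``require a genuinely new idea.'' Neither of your two suggested routes closes it: iterating Product Ramsey again does not change the type structure of subposets inside the new subgrid, so the same cross-type discrepancy persists at every scale; and the Ne\v set\v ril--R\"odl witness, as you note, need not be a grid. In short, your proposal is a correct reduction of the conjecture to its essential obstruction, but it is not a proof.

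It is worth contrasting this with how the paper handles $t=2$. Rather than stratifying by type and then trying to collapse, the paper exploits a rigidity phenomenon specific to dimension two: an $\ss^2$ grid has a \emph{unique} two-element realizer (Lemma~\ref{lemma:uniquerealizer}), and consequently a casual embedding of $\ss^2$ into $(\ss^2)^2$ is unique up to automorphism (Lemma~\ref{lemma:uniqueembedding}). This lets one define the ``core'' of any $\MM^2$ subgrid unambiguously, color subgrids by the color of their core, and apply Product Ramsey once. Every $\mm^2$-subposet of the resulting core is then automatically the core of some subgrid, so no cross-type collapse is ever needed. For $t\ge 3$ there are many inequivalent realizers of $\ss^t$, the core is no longer well-defined, and the paper in fact records a proposed counterexample of Trotter for $t=3$ built precisely on this multiplicity of realizer types---which is exactly the obstruction your step 4 would have to overcome.
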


In Section~\ref{s:subposet}, we prove the conjecture for the special case $t=2$.

\begin{theorem*}
For all $r$, $m$, and $l$ there exists an $n$ such that for all $r$-colorings of the $\mm^2$ subposets of $\nn^2$, there is a monochromatic $\ll^2$ subposet $P$. That is, every $\mm^2$ subposet of $P$ receives the same color.
\end{theorem*}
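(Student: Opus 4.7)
The plan is to combine the Product Ramsey Theorem with a classification of $\mm^2$ subposets of $\nn^2$ by combinatorial type. Given any $\mm^2$ subposet $S$ of $\nn^2$, use the canonical labeling $S = \{(x_{ij}, y_{ij}) : (i,j) \in [m]^2\}$ and set $a = |\{x_{ij}\}| \le m^2$ and $b = |\{y_{ij}\}| \le m^2$. The \emph{type} of $S$ is the pair of weakly order-preserving maps $[m]^2 \to [a]$ and $[m]^2 \to [b]$ recording the coincidence patterns of the $x$- and $y$-coordinates. The number of types depends only on $m$, so there are only finitely many.

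For each type $\tau$ with parameters $(a_\tau, b_\tau)$, the type-$\tau$ subposets of $\nn^2$ are in natural bijection with pairs $(X, Y) \in \binom{[n]}{a_\tau} \times \binom{[n]}{b_\tau}$, since the coordinate sets together with the type pattern determine the subposet. An $r$-coloring of all $\mm^2$ subposets thus induces, for each type, an $r$-coloring of the corresponding pair-space. I would apply the Product Ramsey Theorem iteratively, once per type, to obtain $L_1, L_2 \subseteq [n]$ such that every type-$\tau$ subposet whose $x$-projection lies in $L_1$ and $y$-projection lies in $L_2$ receives a single color $c_\tau$, depending on $\tau$ but not on position.

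The hard part is then to produce an $\ll^2$ subposet $P$ of $L_1 \times L_2$ on which all $\mm^2$ subposets receive the same color, given that $c_\tau$ may a priori differ across types. Since for $l$ large relative to $m$ every type is realizable inside $\ll^2$, one cannot simply choose $P$ to avoid types carrying the ``wrong'' color. My approach is to apply the Product Ramsey Theorem once more at a higher level, with the indexing objects being $\ll^2$-product subposets of $L_1 \times L_2$ and the coloring being the color-profile induced on their sub-$\mm^2$s, and then to exploit the two-dimensional combinatorics to show that the resulting homogenization forces the distinct $c_\tau$'s to collapse to a common color on $P$. This step is presumably what makes the argument specific to $t=2$, the analogous reasoning apparently failing in higher dimensions and leaving the general conjecture open.
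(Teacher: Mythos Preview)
Your first two steps are sound: classifying $\mm^2$ subposets of $\nn^2$ by the coincidence pattern of their coordinates and then iterating the Product Ramsey Theorem over the finitely many types does yield sets $L_1,L_2$ on which every type-$\tau$ copy receives a fixed color $c_\tau$. But step~3 is where the actual content of the theorem lives, and your plan for it does not work. After step~2 the color-profile $\tau\mapsto c_\tau$ is already the same on \emph{every} $\ll^2$ subgrid of $L_1\times L_2$, so a ``higher level'' application of the Product Ramsey Theorem to such subgrids, colored by their profile, is vacuous: the coloring is constant and you learn nothing. If instead you meant to color $\ll^2$ \emph{subposets} by their profile, the Product Ramsey Theorem no longer applies---you would need precisely the statement you are trying to prove. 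The phrase ``exploit the two-dimensional combinatorics'' names the location of the difficulty but not its resolution.

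What is missing is the specific two-dimensional fact that the paper isolates: the grid $\mm^2$ has a \emph{unique} realizer of size~2 (up to swapping the two linear extensions). Equivalently, a casual embedding of $\mm^2$ into $(\mathbf{m^2})^2$ is unique up to automorphism. In your language this says that among all the types you enumerate, there is exactly \emph{one} generic type (the one with $a=b=m^2$), not many. The paper exploits this by defining, for each $(\mathbf{m^2})^2$ subgrid $Q$, its \emph{core}: the unique casually embedded $\mm^2$ inside $Q$. One then colors $(\mathbf{m^2})^2$ subgrids by the given color of their core and applies the Product Ramsey Theorem \emph{once}. The resulting monochromatic $(\mathbf{l^2})^2$ subgrid has a core $P\cong\ll^2$, and every $\mm^2$ subposet of $P$---having no coordinate ties---is automatically the core of the $(\mathbf{m^2})^2$ subgrid it spans, hence receives the common color. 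Your framework can be repaired along these lines (after step~2, take $P$ to be a casually embedded $\ll^2$ in $L_1\times L_2$ and observe all its $\mm^2$ subposets are of the single generic type), but the repair \emph{is} the uniqueness lemma, and without it the $c_\tau$'s have no reason to agree.
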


In Section~\ref{s:extension} we, again, use the Product Ramsey Theorem to prove the following, somewhat counterintuitive result. This is a generalization of a classical result by Paoli, Trotter, and Walker \cite{PTW}.

\begin{theorem*}
Let $X$ be a poset and let $M$ be a linear extension of $X$. Furthermore, let $k$ be a positive integer. Then there exists a grid $Y\cong \nn^t$ such that for all $L_1,L_2,\ldots,L_k$ linear extensions of $Y$, there is a subposet $X'$ of $Y$ such that
\begin{itemize}
\item $X'\cong X$, evidenced by the embedding $f:X\to Y$;
\item for all $i=1,\ldots,k$, and for all $a,b\in X$, we have $a<b$ in $M$ if and only if $f(a)<f(b)$ in $L_i$.
\end{itemize}
\end{theorem*}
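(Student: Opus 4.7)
The plan is to use the Product Ramsey Theorem, analogously to the grid proposition in Section~\ref{s:grid}. I would first order the elements of $X$ according to $M$ as $x_1 <_M x_2 <_M \cdots <_M x_s$, where $s = |X|$, and embed $X$ into the Boolean lattice $\two^s$ via the characteristic-function map $\iota(x_i) = v_i$, with $(v_i)_j = 1$ iff $x_j \le_X x_i$. This gives a rigid combinatorial template of $X$ that I will scale up inside a grid.

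Then I would take $t = s$ and $Y = \nn^s$ for $n$ large and to be determined. Each $\bar A = (A_1, \ldots, A_s) \in \binom{[n]}{2}^s$ of 2-element subsets specifies a sub-grid $A_1 \times \cdots \times A_s \cong \two^s$, and via $\iota$ yields a labeled copy of $X$ with images $w_i(\bar A) \in Y$. Given $k$ linear extensions $L_1, \ldots, L_k$ of $Y$, I would color $\bar A$ by the $k$-tuple of permutations $(\sigma_1(\bar A), \ldots, \sigma_k(\bar A)) \in (S_s)^k$ recording how each $L_i$ orders $(w_1(\bar A), \ldots, w_s(\bar A))$; this is an $(s!)^k$-coloring. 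By the Product Ramsey Theorem, for $n$ large enough there exist $l$-element subsets $B_j \subseteq [n]$ with the coloring constant on $\binom{B_1}{2} \times \cdots \times \binom{B_s}{2}$, yielding a sub-grid isomorphic to $\ll^s$ in which every $\iota$-copy of $X$ has the same common $k$-tuple $(\sigma_1^*, \ldots, \sigma_k^*)$ of induced permutations.

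The hard part will be the gap between monochromaticity and identity: Product Ramsey forces $(\sigma_1^*, \ldots, \sigma_k^*)$ to be constant across $\iota$-copies but does not force it to be $(\mathrm{id}, \ldots, \mathrm{id})$, so a naive invocation still leaves $L_i$-copies incompatible with $M$. I expect that overcoming this requires enriching the coloring to simultaneously track many different embeddings of $X$ into sub-grids $\mm^s$ for $m$ larger than $2$, so that monochromaticity delivers a whole family of constant permutation-tuples indexed by the embedding; then, combining this with a Paoli--Trotter--Walker type argument applied one linear extension at a time should locate an embedding in the family whose tuple is $(\mathrm{id}, \ldots, \mathrm{id})$, yielding a copy of $X$ compatible with $M$ in every $L_i$. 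The required values of $n$, $l$, and $m$ can then be extracted by a nested application of the Product Ramsey Theorem.
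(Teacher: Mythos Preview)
You have correctly identified both the right engine (the Product Ramsey Theorem) and the real obstacle: monochromaticity of the permutation-tuple $(\sigma_1^*,\ldots,\sigma_k^*)$ is not the same as forcing it to equal $(\mathrm{id},\ldots,\mathrm{id})$. But your proposed fix cannot work with the dimension fixed at $t=s=|X|$, and this is not a matter of insufficient cleverness in the coloring. Take $X$ to be a two-element antichain $\{a,b\}$ with $M$ ordering $a<b$, so $s=2$, and take $k=2$. On $Y=\nn^2$ let $L_1$ be colexicographic and $L_2$ lexicographic. Any incomparable pair in $Y$ has the form $(p,q),(p',q')$ with $p<p'$ and $q>q'$ (or the reverse), and one checks immediately that $L_1$ and $L_2$ order such a pair oppositely. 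Hence no embedding of $X$ into $\nn^2$ can agree with $M$ under both $L_1$ and $L_2$, no matter how large $n$ is or how many embeddings you track. The dimension $t$ must grow with $k$; your suggestion of iterating the $k=1$ Paoli--Trotter--Walker argument one extension at a time also fails, since each application returns only a copy of $X$, not a grid in which to run the next step.

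The paper closes the gap by a genuinely different mechanism. It takes $t$ large (determined by Rothschild's partition Ramsey theorem, Theorem~\ref{partitionramsey}), colors hypercubes by how each $L_i$ orders each antipodal pair, and after repeated Product Ramsey reductions obtains a $2^k$-coloring of the $2$-partitions of $[t]$. Rothschild then yields a monochromatic $(s+k)$-partition $\psi$ (here $s=\dim X$). The key step you are missing is an alternating-cycle argument: because $\psi$ has at least three parts, any ``bad'' bit in the common color would force two incomparable pairs in a hypercube to be simultaneously reversed in some $L_i$ while forming an alternating cycle, contradicting that $L_i$ is a linear extension. This forces the color to be all-good, after which one embeds $X$ by grouping coordinates along the parts of $\psi$, using $k$ copies of $M$ on the first $k$ parts and a realizer of $X$ on the remaining $s$ parts.
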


We note that, although some of the problems studied may be interesting for infinite posets, in this paper every poset is finite. In fact, we will omit the word finite, and even when we say, e.g.\ ``class of all posets'', we mean ``class of all finite posets''.

\section{Relational sets}\label{s:relational}

First we define the Ramsey property of general classes of sets with relations to prove a general lemma that shows that the number of colors (as long as it is at least $2$) does not matter.

Let $X$ be a set, and $r$ a positive integer. An \emph{$r$-coloring} of $X$ is a function $c:X\to[r]$, where $[r]=\mathcal\{1,\ldots,r\}$. The elements of $[r]$ are called \emph{colors}. Indeed, any function $g: X\to S$ where $|S|=r$ can be considered an $r$-coloring. A \emph{relation} $R$ on $X$ is a subset $R\subseteq X\times X$. If $X'\subseteq X$, we use the usual notation $c|_{X'}$ and $R|_{X'}$ for the restriction of $c$ and $R$ (respectively) to the subset $X'$.

Let $\mathcal{C}$ be a class of ordered pairs $(X,R)$, where $X$ is a set, and $R$ is a relation on $X$. We say that $\mathcal{C}$ has the \emph{Ramsey Property}, if for all $(X,R)\in\mathcal{C}$ and for all $r$ positive integer, there exists $(Y,S)\in\mathcal{C}$ such that for every $r$-coloring $c$ of $S$, there exists a subset $Y'\subseteq Y$ such that if $S'=S|_{Y'}$, then $(Y',S')\cong (X,R)$, and for all $a,b\in S'$, we have $c|_{S'}(a)=c|_{S'}(b)$.
Less formally $\mathcal{C}$ has the Ramsey property, if for all $\mathbf{X}\in\mathcal{C}$ there is a (larger) $\mathbf{Y}\in\mathcal{C}$, such that if we $r$-color the relations of $\mathbf{Y}$, we will find a monochromatic subrelation $\mathbf{X'}$ of $\mathbf{Y}$ that is isomorphic to $\mathbf{X}$. Monochromatic means that every relation of $\mathbf{X'}$ is assigned the same color. The set (with the relation) $\mathbf{Y}$ is called the \emph{Ramsey set} of $X$.

With a slight abuse of notation, we will often write $X$ for the pair $\mathbf{X}=(X,R)$.

After this definition one might think that the classical theorem of Ramsey could be rephrased by perhaps saying that the set of (complete) graphs have the Ramsey Property, using the usual definition of a graph as an irreflexive, symmetric relation. This is, however, not the case. Clearly, one can $2$-color the relations of a graph by coloring the two directions of an edge opposite colors for each edge, and then no monochromatic edge will even be found.

On the other hand, it \emph{is} possible to state Ramsey's Theorem with this terminology: it is the statement that the class of linear orders has the Ramsey Property. This will be a special case of our Proposition~\ref{gridramsey}.

The following lemma is often useful when one is trying to prove that a class has the Ramsey Property.

\begin{lemma}
Suppose $\mathcal{C}$ has the following property: for all $X\in\mathcal{C}$ there is a $Y\in\mathcal{C}$ and a positive integer $r_0\geq 2$, such that if we $r_0$-color the relations of $Y$, we will find a monochromatic subrelation $X'$ of $Y$ that is isomorphic to $X$. Then $\mathcal{C}$ has the Ramsey Property.
\end{lemma}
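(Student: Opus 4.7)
The plan is to use the familiar color-amplification induction, going from the fixed value $r_0$ in the hypothesis up to arbitrary $r$. As a preliminary reduction, I would first observe that if $Y$ provides a monochromatic copy of $X$ under every $r_0$-coloring of its relations, then $Y$ also works for every $r$-coloring with $r\le r_0$, since such a coloring is in particular an $r_0$-coloring (simply leaving $r_0-r$ colors unused). Hence one may assume without loss of generality that the hypothesis holds with $r_0=2$, uniformly in $X$; this is not essential, but it removes one layer of indexing from the induction.

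Then I would induct on $r\ge 2$. The base case $r=2$ is exactly the strengthened hypothesis. For the inductive step, assume the Ramsey Property has been established for $r$ colors, fix $X\in\mathcal{C}$, first apply the inductive hypothesis to pick $Y_1\in\mathcal{C}$ that is $r$-color Ramsey for $X$, and then apply the hypothesis to $Y_1$ to pick $Y\in\mathcal{C}$ that is $2$-color Ramsey for $Y_1$. The claim is that this $Y$ is $(r+1)$-color Ramsey for $X$. Given an $(r+1)$-coloring $c$ of the relations of $Y$, I would coarsen it to a $2$-coloring $c'$ by sending every relation of $c$-color $r+1$ to color $1$ and every relation of $c$-color in $[r]$ to color $2$. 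By the choice of $Y$, there is a subset $Y_1'\subseteq Y$ with restricted relation isomorphic to $Y_1$ and all of whose relations get the same $c'$-color. If that color is $1$, every relation of $Y_1'$ already has $c$-color $r+1$, so any embedded copy of $X$ inside $Y_1'$ (which exists by the Ramsey property of $Y_1$) is $c$-monochromatic. If that color is $2$, then $c$ restricted to $Y_1'$ is an $r$-coloring, and because $Y_1'\cong Y_1$ is $r$-color Ramsey for $X$, one finds a $c$-monochromatic copy of $X$ inside $Y_1'$ directly from the inductive hypothesis.

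The argument is quite routine and I do not anticipate a real obstacle; the only points to keep straight are the preliminary reduction to a uniform $r_0=2$ (so that one does not have to juggle a variable $r_0$ through the induction) and the mild but essential observation that the monochromatic copy $Y_1'$ supplied by the $2$-color Ramsey property of $Y$ is genuinely isomorphic to $Y_1$ as a relational structure in $\mathcal{C}$, so that its own $r$-color Ramsey property can be invoked on the restricted coloring.
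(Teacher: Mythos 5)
Your proposal is correct and follows essentially the same route as the paper: induction on the number of colors, with the inductive step obtained by composing an $r$-color Ramsey set for $X$ with a $2$-color Ramsey set for that set, and coarsening the given coloring to two classes (one distinguished color versus the rest). The only cosmetic differences are your preliminary normalization to $r_0=2$ (the paper instead absorbs $r\le r_0$ into the base case) and which single color you split off in the coarsening, neither of which changes the argument.
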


\begin{proof}
Let $\mathcal{C}$ be a class, $X\in\mathcal{C}$, and $r$ a positive integer; we need to show that a Ramsey set $Y$ can be found. We will do that by induction on $r$. If $r\leq 2$, then by the conditions there exists $Y$ and $r_0\geq 2$. Since $r\leq r_0$, an $r$-coloring is a special $r_0$-coloring, so the statement follows.

Now let $r>2$, and assume the statement is true for $r-1$. So there exists $Y\in\mathcal{C}$, such that if we $r-1$-color the relations of $Y$, there is a monochromatic subrelation $X'$ of $Y$ that is isomorphic to $X$.

We can use the hypothesis again for $Y\in\mathcal{C}$ and $2$-colors. There exists a $Z\in\mathcal{C}$ such that any 2-coloring of the relations of $Z$ yields a monochromatic copy of $Y$. We claim that $Z$ is a correct choice for the original set $X$ and $r$ colors.

To see this, consider an $r$-coloring $c$ of the relations of $Z$. Now recolor $Z$ with only $2$ colors based on the $c$: if $c(x)=1$, use the color blue; otherwise use the color red. We know $Z$ yields a monochromatic $Y$. If $Y$ is blue, then we notice that $X$ is a subrelation of $Y$, so we found a monochromatic $X$. If $Y$ is red, then we revert to $c$ to color the relations of $Y$ with $r-1$ colors, and we find the monochromatic $X$ this way.
\end{proof}

For the balance of this paper we assume the reader is familiar with basic notions of partially ordered sets and graph theory. We refer the reader to the monograph of Trotter \cite{Trotter}, and the textbook of Diestel \cite{Diestel}.

\section{Two questions of \nesetril\ and Pudl\'ak}\label{s:np}

Recall that the Iverson bracket is a notation that converts a logical proposition to $0$ or $1$: $[P]=1$ if $P$ is true, and $[P]=0$, if $P$ is false.

Let $P$ be a poset, and let $(\mathcal{L},S)$ be a pair where $\mathcal{L}=\{L_1,\ldots,L_d\}$, $(d\geq 1$) is a set of linear orders of the elements of $P$, and $S$ is a set of binary (0--1) strings of length $d$. For two distinct elements $x,y\in P$, let $P_i(x,y)$ be the proposition that $x<y$ in $L_i$. We call $(\mathcal{L},S)$ a \emph{Boolean realizer}, if for any two distinct elements $x,y$, we have $x<y$ in $P$ if and only if $[P_1(x,y)] [P_2(x,y)] \ldots [P_d(x,y)]\in S$. We call this binary string the \emph{signature} of the pair $(x,y)$. The number $d$ is the cardinality or size of the Boolean realizer. The minimum cardinality of a Boolean realizer is the Boolean dimension of $P$, denoted by $\dim_B(P)$.

We note that there are minor variations in the the definition of Boolean realizers in the literature. (See next paragraph for citations.) With our definition, antichains are of Boolean dimension $1$ (one can take $S=\emptyset$), chains are of Boolean dimension $1$, and in general, $\dim_B(P)\leq\dim(P)$, because a Dushnik--Miller realizer $P$ can be easily converted into a Boolean realizer of the same size by taking $S=\{11\ldots 1\}$.

Boolean dimension and structural properties of posets have seen an increased interest in recent years, e.g.\ in \cite{FelsnerMeszarosMicek}, the authors showed that posets with cover graphs of bounded tree-width have bounded Boolean dimension. Further, in \cite{dimensions}, the authors compared the Dushnik--Miller dimension, Boolean dimension and local dimension in terms of tree-width of its cover graph, and in \cite{MeszarosMicekTrotter}, the authors studied the behavior of Boolean dimension with respect to components and blocks.

As mentioned earlier, the following statement appeared without proof in \cite{NesetrilPudlak}. We include a proof for completeness.

\begin{proposition}
Statement~\ref{sta:np2} implies Statement~\ref{sta:np1}.
\end{proposition}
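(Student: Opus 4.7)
The plan is to argue by contrapositive, invoking the classical theorem of Kelly that the (Dushnik--Miller) dimension of planar posets is unbounded. Suppose, for contradiction, that every planar poset has Boolean dimension at most $d$. Fix a planar poset $P$ with $\dim(P) > d$, which exists by Kelly's construction, and apply Statement~\ref{sta:np2} with $r = 2^d$ colors to produce a planar Ramsey target $Q$.

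By the standing assumption, $Q$ admits a Boolean realizer $(\{L_1,\dots,L_d\}, S)$. I will use this realizer to $r$-color the comparabilities of $Q$: to each comparable pair $x < y$ in $Q$, assign the signature of $(x,y)$, which by definition lies in $S$. Since $|S| \leq 2^d = r$, this is a legitimate $r$-coloring, and the Ramsey property yields an induced subposet $P' \subseteq Q$ with $P' \cong P$ such that every comparability of $P'$ carries a common signature $\sigma \in S$.

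The key step is to convert the restrictions $L_i|_{P'}$ into a Dushnik--Miller realizer of $P'$ of size $d$. For each index $i$ with $\sigma_i = 0$, I reverse $L_i|_{P'}$; the rest are left unchanged, producing linear orders $L_1', \dots, L_d'$. For comparable pairs $x < y$ in $P'$, monochromaticity forces $x < y$ in every $L_i'$. For an incomparable pair $\{x,y\}$ in $P'$, note that $x$ and $y$ are also incomparable in $Q$ (since $P'$ is induced), and hence the signatures of both $(x, y)$ and $(y, x)$ in $(L_1, \dots, L_d)$ lie outside $S$, and in particular differ from $\sigma$. Consequently neither ``$x < y$ in every $L_i'$'' nor ``$y < x$ in every $L_i'$'' can hold, so $\{L_1', \dots, L_d'\}$ realizes $P'$ in the Dushnik--Miller sense. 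This gives $\dim(P) = \dim(P') \leq d$, contradicting the choice of $P$.

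The main technical obstacle is the verification for incomparable pairs: one needs both the signature of $(x,y)$ and that of $(y,x)$ in $(L_1,\dots,L_d)$ to avoid $\sigma$. This is precisely the point at which it matters that $\sigma \in S$, and not merely that $\sigma$ is some bitstring --- a signature equal to $\sigma$ in either direction would witness a comparability in $Q$ where none is present. Once this is handled, the remainder is bookkeeping about how reversing $L_i$ exactly in the positions where $\sigma_i = 0$ converts ``signature equals $\sigma$'' into the familiar ``signature equals $1\cdots 1$'' characterization of a Dushnik--Miller realizer. The whole argument is thus a clean reduction from a Boolean dimension bound on the Ramsey target to a classical dimension bound on the monochromatic subposet.
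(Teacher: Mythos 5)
Your proof is correct and follows essentially the same route as the paper's: color each comparability of $Q$ by its signature under a size-$d$ Boolean realizer, extract a monochromatic copy of $P$, and dualize the $L_i$ with $\sigma_i=0$ to obtain a Dushnik--Miller realizer of size $d$, contradicting Kelly's unboundedness of dimension for planar posets. The only difference is that you spell out the verification the paper labels ``routine,'' including the key point that $\sigma\in S$ while signatures of incomparable pairs (in either order) lie outside $S$.
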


\begin{proof}
Assume that Statement~\ref{sta:np2} is true, but the Boolean dimension of planar posets is at most $k$. Let $P$ be a planar poset whose Dushnik--Miller dimension is greater than $k$ (such a poset is well-known to exist). By Statement~\ref{sta:np2}, there is a planar poset $Q$ such that any $2^k$-coloring of the comparabilities of $Q$ yields a monochromatic $P$.

Let $(\mathcal{L},S)$ be a Boolean realizer of size $k$ of $Q$, and let $\mathcal{L}=\{L_1,\ldots,L_k\}$. Color the comparabilities of $Q$ with binary strings of length $k$ as colors: if $x<y$ in $Q$, let the color of $(x,y)$ be the signature of $(x,y)$.

Now let $P'$ be a subposet of $Q$ such that $P'\cong P$ and every comparable pair of $P'$ is of the same color, say $d_1d_2\ldots d_k$ (where $d_i$ is the $i$th digit of the binary string). Let $M_i=L_i$ if $d_i=1$, and let $M_i=L_i^d$ (the dual of $L_i$), if $d_i=0$. It is routine to verify that $\{M_1,\ldots,M_k\}$ is a realizer of $P$, contradicting $\dim(P)>k$.
\end{proof}

\section{Ramsey property of planar graphs}\label{s:planar}

We noted earlier that our general notion of Ramsey Property is not very natural for studying graphs, because we can color the two directions of an edge with different colors. So it is natural to redefine the Ramsey Property specifically for classes of graphs.

We say that a class of graphs $\mathcal{C}$ has the \emph{Ramsey Property}, if for all $G\in\mathcal{C}$ and for all $r$ positive integers, there exists $H\in\mathcal{C}$ such that for every $r$-coloring of $E(H)$, there exists an induced subgraph $G'$ of $H$ such that $G'\cong G$, and every edge of $G'$ is of the same color. We use the term ``monochromatic'' as before, and we call the graph $H$ the Ramsey graph of $G$.

Ramsey's Theorem can be restated by saying the class of complete graphs has the Ramsey Property. The fact that the class of all graphs has the Ramsey Property is a more difficult statement, and it was proven around 1973 independently by Deuber \cite{Deuber}, by Erd\H os, Hajnal and P\'osa \cite{ErdosHajnalPosa}, and by R\"odl \cite{Rodl-MT}.

We note that as for general relations, the analogous lemma is true and can be proven exactly the same way.

\begin{lemma}
Suppose $\mathcal{C}$, a class of graphs, has the following property: for all $G\in\mathcal{C}$ there is a $H\in\mathcal{C}$ and a positive integer $r_0\geq 2$, such that if we $r_0$-color the edges of $H$, we will find a monochromatic induced subgraph $G'$ of $H$ that is isomorphic to $G$. Then $\mathcal{C}$ has the Ramsey Property.
\end{lemma}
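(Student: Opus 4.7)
My plan is to mirror the proof of the relational-set lemma line-by-line, since the only substantive difference is ``subrelation'' versus ``induced subgraph,'' and induced subgraphs compose the same way subrelations do. I will induct on the number of colors $r$, using the hypothesized $r_0\geq 2$ both as the base of the induction and as the ``compression'' step that lets me trade $r$ colors for $r-1$ colors at the cost of going to a larger Ramsey graph.

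For the base case I take $r\leq r_0$; then any $r$-coloring is a special case of an $r_0$-coloring, so the hypothesis directly provides the desired $H$. For the inductive step, fix $r>r_0$ and suppose the conclusion holds for $r-1$. Given $G\in\mathcal{C}$, first apply the inductive hypothesis to obtain $H\in\mathcal{C}$ that is Ramsey for $G$ with $r-1$ colors. Then apply the standing hypothesis to $H$ to obtain $Z\in\mathcal{C}$ with the $r_0$-color property. I claim $Z$ is a Ramsey graph for $G$ with $r$ colors. Given an $r$-coloring $c$ of $E(Z)$, define an auxiliary $r_0$-coloring $c'$ by merging the colors $\{r_0,r_0+1,\ldots,r\}$ into a single color while keeping colors $1,\ldots,r_0-1$ as themselves. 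The hypothesis yields a monochromatic (under $c'$) induced copy $H'\cong H$ in $Z$. If the $c'$-color of $H'$ lies in $\{1,\ldots,r_0-1\}$, then $H'$ already contains a monochromatic induced copy of $G$ under $c$, and since $G\subseteq H'\subseteq Z$ is induced in $Z$, we are done; otherwise every edge of $H'$ has $c$-color in $\{r_0,\ldots,r\}$, a set of size $r-r_0+1\leq r-1$, so the inductive hypothesis applied to the restriction $c|_{E(H')}$ produces the required monochromatic induced $G$ inside $H'$, and hence inside $Z$.

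The only point that needs any care, and which is really the whole content of the argument, is that ``induced subgraph of an induced subgraph is an induced subgraph.'' Concretely, if $V(G')\subseteq V(H')\subseteq V(Z)$ and $H'$, $G'$ are induced in $Z$ and $H'$ respectively, then $G'=Z[V(G')]$, so the colors inherited from $c$ on edges of $G'$ are exactly the $c$-colors on $E(Z)\cap\binom{V(G')}{2}$, matching what we used in both branches above. There is no real obstacle beyond this bookkeeping, which is why the authors chose to elide the proof; the only reason the relational version required an explicit argument is that the $r_0\geq 2$ condition was essential for the merging step, and that feature transfers verbatim here.
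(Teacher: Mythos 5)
Your proof is correct and is essentially the paper's own argument: the paper proves the relational version by induction on $r$ with a color-merging step and then states that the graph version ``can be proven exactly the same way,'' which is what you have done, the only cosmetic difference being that you merge the colors $\{r_0,\ldots,r\}$ into one class (peeling off $r_0-1$ colors at a time) where the paper merges everything except color $1$ into a single ``red'' class (peeling off one color at a time). One small point of hygiene: the $r_0$ supplied by the hypothesis depends on the graph it is applied to, so the $r_0$ in your inductive step is the one attached to $H$, not to $G$; this costs nothing, since $r_0\geq 2$ always makes the residual palette $\{r_0,\ldots,r\}$ have at most $r-1$ colors, and if it happens that $r<r_0(H)$ the merged class is empty and the first branch of your case analysis already finishes the proof.
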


Motivated by our problem on planar posets, we were curious whether the class of planar graphs has the Ramsey Property. This is not the case. In fact, as Axenovich et al.\  \cite{planar} pointed out, a result of Gon\c calves \cite{Goncalves} and the Four Color Theorem imply that if $G$ has an appropriate $H$ as in the definition, then $G$ must be planar bipartite. But to just prove that the class of planar graphs does not have the Ramsey Property, we only need elementary tools.

\begin{proposition}
The class of planar graphs do not have the Ramsey Property.
\end{proposition}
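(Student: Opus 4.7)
The plan is to exhibit a single planar graph $G$ for which no planar $H$ can serve as a Ramsey graph, even for a modest fixed number of colors $r$. The natural candidate is the triangle $G = K_3$: it is planar and, crucially, \emph{not} bipartite. Given an arbitrary planar graph $H$, I will construct an $r$-coloring of $E(H)$ with no monochromatic induced copy of $K_3$; this falsifies the Ramsey Property for the class of planar graphs.

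The construction piggybacks on a proper vertex coloring of $H$. By the elementary Six Color Theorem (every planar graph has a vertex of degree at most $5$, so a simple induction on $|V(H)|$ produces a proper $6$-coloring), there exists a proper coloring $\chi: V(H) \to \{1,\ldots,6\}$. Now color each edge $e = \{u,v\}$ of $H$ by the unordered pair $\{\chi(u), \chi(v)\}$. Since $\chi$ is proper, this uses at most $\binom{6}{2} = 15$ colors, so take $r = 15$.

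Suppose for contradiction that vertices $u,v,w$ induce a monochromatic triangle, say of color $\{a,b\}$. Then each of the three edges among $\{u,v,w\}$ has endpoints whose $\chi$-colors form the set $\{a,b\}$, so $\chi(u), \chi(v), \chi(w) \in \{a,b\}$. By pigeonhole, two of $u,v,w$ receive the same $\chi$-color, contradicting the fact that $\chi$ is a proper coloring of the edge between them. Hence no monochromatic induced $K_3$ exists, and $H$ fails the required property.

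The only genuine step is selecting a non-bipartite planar $G$ and realizing that, under the proposed coloring, each color class is bipartite (with parts contained in the vertices of the two colors $a,b$); the pigeonhole at the end is then immediate. No deep ingredient is required beyond the Six Color Theorem, consistent with the promise that only elementary tools are needed. One could alternatively cite the Four or Five Color Theorems to reduce $r$ to $6$ or $10$, but this is not necessary for the qualitative conclusion.
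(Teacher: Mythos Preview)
Your proof is correct and follows essentially the same approach as the paper: use a proper $6$-coloring of the vertices of $H$ to partition $E(H)$ into $\binom{6}{2}$ bipartite color classes, so that no monochromatic copy of a non-bipartite planar graph can appear. The only cosmetic difference is that you fix $G=K_3$ while the paper takes an arbitrary non-bipartite planar $G$.
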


\begin{proof}
Let $G$ be a planar graph that is not bipartite. Now suppose that the class of planar graphs has the Ramsey Property. Then there exists a planar Ramsey graph $H$ for $G$. Since $\chi(H)\leq 6$, one can decompose the edge set of $H$ into $\binom{6}{2}$ bipartite graphs. (These numbers can obviously be improved.) Color the edges of $H$ based on which of these bipartite graphs they are in. Let $G'\cong G$ be a monochromatic induced subgraph of $H$. Since the edges of $G'$ use a single color, $G'$ is bipartite, a contradiction.
\end{proof}

\section{Ramsey property of grids}\label{s:grid}

We will use $\kk$ to denote the $k$-element chain, and $\kk^t$ for the poset that is the product of the $k$-element chain by itself $t$ times. In more details, suppose the ground set of $\kk$ is $X=\{x_1,x_2,\ldots,x_k\}$ with $x_1<x_2<\cdots<x_k$. Then the elements of $\kk^t$ are $t$-tuples $(x_{i_1},x_{i_2},\ldots,x_{i_t})$ with $1\leq i_l\leq k$ for all $l$, and $(x_{i_1},\ldots,x_{i_t})\leq (x_{j_1},\ldots,x_{j_t})$ in $\kk^t$ if and only if $i_l\leq j_l$ for all $l$.

 This poset will be called the $\kk^t$ grid. The number $t$ is called the dimension of the grid. This coincides with the Dushnik--Miller dimension of the poset for $k\geq 2$, so it will not cause confusion.

Let the ground set of the poset $\nn^t$ be the set of $t$-tuples of numbers from $[n]$. Let $S_1,S_2,\ldots,S_t$ be nonempty subsets of $[n]$. The subposet induced by the elements of $S_1\times\cdots\times S_t$ is called a \emph{subgrid} of $\nn^t$. Of course, every subgrid is a subposet that is a grid, but the converse is not true.

We will use the Product Ramsey Theorem, which can be phrased with our terminology as follows.

\begin{theorem}[Product Ramsey Theorem \cite{GrahamRothschildSpencer}]
For all $t$, $r$, $m$, and $l$ there exists an $n$ such that for all $r$-colorings of the $\mm^t$ subgrids of $\nn^t$, there is a monochromatic $\ll^t$ subgrid $L$. That is, every $\mm^t$ subgrid of $L$ receives the same color.
\end{theorem}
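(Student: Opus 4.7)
The plan is to proceed by induction on the dimension $t$. The base case $t=1$ is precisely the classical Ramsey theorem for $m$-element subsets: for every $r$, $m$, and $l$ there exists $n$ such that any $r$-coloring of $\binom{[n]}{m}$ admits a monochromatic $l$-subset. Since the one-dimensional $\mm^1$ subgrids of $\nn^1$ are exactly the $m$-subsets of $[n]$, the base case is immediate.

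For the inductive step, I would assume the Product Ramsey Theorem holds in dimension $t-1$ for \emph{every} palette size, every $m$, and every $l$. Given the parameters $r$, $m$, $l$ for dimension $t$, first pick $n_t$ at least as large as the ordinary Ramsey number provided by the base case applied to $r$, $m$, and $l$. Next set $R = r^{\binom{n_t}{m}}$, and invoke the inductive hypothesis with $R$ colors to choose $n_1$ so that any $R$-coloring of the $\mm^{t-1}$ subgrids of $[n_1]^{t-1}$ admits a monochromatic $\ll^{t-1}$ subgrid. I would then take $n = \max(n_1, n_t)$.

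The heart of the argument is to \emph{compress} the last coordinate into an enlarged palette. Given an $r$-coloring $c$ of the $\mm^t$ subgrids of $[n]^t$, define a coloring $C$ of the $\mm^{t-1}$ subgrids of $[n_1]^{t-1}$ by
\[
C(A_1 \times \cdots \times A_{t-1}) = \bigl(\, c(A_1 \times \cdots \times A_{t-1} \times B)\,\bigr)_{B \in \binom{[n_t]}{m}},
\]
which is an $R$-coloring. By the inductive hypothesis, there is a monochromatic $\ll^{t-1}$ subgrid $G^* = C_1 \times \cdots \times C_{t-1}$. Monochromaticity under $C$ means that for every $B \in \binom{[n_t]}{m}$, the value $c(A_1 \times \cdots \times A_{t-1} \times B)$ depends only on $B$ as $A_1 \times \cdots \times A_{t-1}$ ranges over $\mm^{t-1}$ subgrids of $G^*$. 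This extracts a well-defined $r$-coloring $c^*$ on $\binom{[n_t]}{m}$. Applying Ramsey's theorem to $c^*$ yields a monochromatic $l$-subset $C_t \subseteq [n_t]$, and then $C_1 \times \cdots \times C_t$ is the desired monochromatic $\ll^t$ subgrid.

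The main obstacle is conceptual rather than computational: the induction only closes if the blow-up in the number of colors (from $r$ to $r^{\binom{n_t}{m}}$) is permitted at each step. This is why the inductive hypothesis must be stated uniformly in the palette size; the base case, namely ordinary Ramsey, already accommodates arbitrarily many colors, so the induction goes through. Everything else is bookkeeping about how $\mm^t$ subgrids factor as an $\mm^{t-1}$ subgrid in the first $t-1$ coordinates together with an $m$-subset in the last.
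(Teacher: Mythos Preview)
The paper does not prove this statement; it is quoted as a known result from Graham--Rothschild--Spencer \cite{GrahamRothschildSpencer} and used as a black box throughout. So there is no ``paper's own proof'' to compare against.

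Your argument is correct and is essentially the standard proof: induct on $t$, absorb the last coordinate into an enlarged palette of size $r^{\binom{n_t}{m}}$, apply the $(t-1)$-dimensional case to find $C_1\times\cdots\times C_{t-1}$, and finish with the ordinary hypergraph Ramsey theorem on the last coordinate to find $C_t$. The only cosmetic point worth making explicit is that when you define $C$ you are implicitly restricting attention to the subgrid $[n_1]^{t-1}\times[n_t]\subseteq[n]^t$; this is harmless since $n=\max(n_1,n_t)$, but the reader has to supply that sentence. Your closing remark about why the induction must be uniform in the number of colors is exactly the right observation.
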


A relatively easy consequence is the following proposition.

\begin{proposition}\label{gridramsey}
For each $t$ positive integer, the class of $t$-dimensional grids have the Ramsey Property.
\end{proposition}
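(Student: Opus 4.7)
The plan is to apply the Product Ramsey Theorem once, with $m=2$, to an induced coloring of $\two^t$-subgrids, and then to fit a copy of $\kk^t$ inside the resulting monochromatic $\ll^t$-subgrid so that every comparability of the image has the same ``shape''.

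Call a comparability $u<v$ of $\nn^t$ \emph{full} if $u_i<v_i$ in every coordinate $i\in[t]$. Every $\two^t$-subgrid contains exactly one full comparability, namely its main diagonal, and every full comparability arises this way from a unique $\two^t$-subgrid. So an $r$-coloring $c$ of the comparabilities of $\nn^t$ induces an $r$-coloring $c'$ of the $\two^t$-subgrids by letting $c'(T)$ be the $c$-color of the full comparability inside $T$. With a value of $\ell$ to be chosen later, the Product Ramsey Theorem then produces an $n$ such that any such $c'$ admits a monochromatic $\ll^t$-subgrid $G$, say of color $\gamma$; equivalently, every full comparability lying inside $G$ has $c$-color $\gamma$.

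The key step is to embed $\kk^t$ into $G$ using only full comparabilities. After identifying $G$ with $[\ell]^t$ by a coordinatewise order-preserving relabeling of its factors, I would define
\[
g(i_1,\ldots,i_t)=\bigl(h_1(i),\ldots,h_t(i)\bigr),\qquad h_l(i)=Mi_l+\sum_{m\neq l}i_m,
\]
with $M$ an integer parameter. For comparable $i<j$ in $\kk^t$, $h_l(j)-h_l(i)=M(j_l-i_l)+\sum_{m\neq l}(j_m-i_m)>0$ in every coordinate $l$, so $g(i)<g(j)$ is a full comparability. For incomparable $i,j$, picking $l$ with $j_l>i_l$ gives $h_l(j)-h_l(i)\ge M-(t-1)(k-1)$, and picking $l'$ with $j_{l'}<i_{l'}$ gives $h_{l'}(j)-h_{l'}(i)\le -M+(t-1)(k-1)$, so once $M\ge(t-1)(k-1)+1$ both signs are forced and $g(i),g(j)$ are incomparable in $[\ell]^t$. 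Since each $h_l$ ranges in $[M+t-1,\,k(M+t-1)]$, the image of $g$ fits in $[\ell]^t$ provided $\ell\ge k(M+t-1)$.

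Choosing $M=(t-1)(k-1)+1$ and $\ell=k(M+t-1)$ makes all the pieces consistent: the Product Ramsey Theorem provides the integer $n$; inside the resulting $G$, the image $g(\kk^t)$ is a $\kk^t$-subposet whose comparabilities are all full, hence all colored $\gamma$; so $\nn^t$ serves as a Ramsey poset for $\kk^t$ with $r$ colors. The main obstacle is calibrating the weights in $h_l$ so that the coefficient $M$ on the ``own'' coordinate dominates the cross-term budget $(t-1)(k-1)$ both to turn each comparable pair in $\kk^t$ into a full comparability and to prevent any incomparable pair from becoming comparable under $g$.
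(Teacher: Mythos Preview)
Your proof is correct and follows essentially the same strategy as the paper: apply the Product Ramsey Theorem with $m=2$, color each $\two^t$-subgrid by the color of its diagonal comparability, and then embed $\kk^t$ into the resulting monochromatic $\ll^t$ so that every comparable pair differs strictly in all coordinates. The only difference is that the paper invokes an abstract ``casual embedding'' (obtained from a size-$t$ realizer of $\kk^t$, with $\ell=k^t$), whereas you construct the embedding explicitly via the weighted coordinate map $h_l(i)=Mi_l+\sum_{m\ne l}i_m$; both devices serve the identical purpose of making every comparability of the image ``full''.
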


Before we present a proof, let us recall some classical results of poset theory.
An $\ss^t$ grid $P$ has Dushnik--Miller dimension at most $t$. As such, it has a realizer $\{L_1,\ldots,L_t\}$, which can be used to embed $P$ into $\mathbb{N}^t$: indeed, the $i$th coordinate of the element $x$ can be chosen to be the position of $x$ in $L_i$ (that is, the size of the closed downset of $x$ in $L_i$). In fact this is an embedding into $\ll^t$, where $l=s^t$. It also has the property that for each pair of integers $i,j$ with $1\leq i\leq t$ and $1\leq j\leq l$, there is exactly one $x\in P$ such that the $i$th coordinate of $x$ in the embedding is $j$.

Such embeddings will be called ``casual''. Here is the precise definition. Let $t,s$ be positive integers, and let $l=s^t$. Let $P=\ss^t$, and $Q=\ll^t$. We define the usual projection functions: for $x\in Q$ and $1\leq i\leq t$, the positive integer $\Proj_i(x)$ is the position (size of closed downset) of the $i$th coordinate of $x$ in $\ll$. An embedding $f:P\to Q$ is called \emph{casual}, if for all $i,j$ with $1\leq i\leq t$ and $1\leq j\leq l$, there is exactly one $x\in P$ such that $\Proj_i(f(x))=j$.

So in a casual embedding there are no ties in coordinates. Note that we also require that there are no ``unused'' coordinates. So a casual embedding of $\ss^t$ into a grid $\ll^t$ always has the property that $l=s^t$.

The existence of a casual embedding is typically proven non-constructively, though it is not difficult to construct one.

Now we are ready to prove Proposition~\ref{gridramsey}.

\begin{proof}
Let $t,r$ be positive integers. Let $s$ be a positive integer, and $P$ be an $\ss^t$ grid. We will show that a Ramsey poset $Q$ exists for $P$.

If $s=1$, the theorem is trivial, so we assume $s\geq 2$.

We invoke the Product Ramsey Theorem for $t$ and $r$ as fixed above, for $m=2$, and $l=s^t$, to get a number $n$. We claim that $Q=\nn^t$ is a Ramsey poset for $P$.

To show this, we consider a coloring $c:C(Q)\to[r]$ of the comparabilities of $Q$; here $C(Q)$ denotes the set $\{(a,b)\in Q: \text{$a$ and $b$ are comparable}\}$. We will use this to define an $r$-coloring of the $\mathbf{2}^t$ subgrids of $Q$ as follows. Let $M$ be a $\mathbf{2}^t$ subgrid, with the least element $a$, and the greatest element $b$. Then we assign the color $c(a,b)$ to this subgrid.

By the Product Ramsey Theorem, a monochromatic $\ll^t$ subgrid exists; let this be called $R$. Let $P'$ be a casually embedded copy of $P$ into $R$; we claim $P'$ is monochromatic. To see this, let $a<b$ in $P'$. Since $a$ and $b$ have distinct $i$th coordinates for each $i=1,\ldots,t$ in $R$ (and $Q$), they determine an $M(a,b)$ $\mathbf{2}^t$ subgrid of $R$ (and $Q$). Due to the choice of $R$ by the Product Ramsey Theorem,  each $M(a,b)$ has the same color $r_0$, which, in turn, implies $c(a,b)=r_0$.
\end{proof}

We would like to note that Ramsey's classical theorem is a special case of Proposition~\ref{gridramsey} when $t=1$.

The special case of the theorem of \nesetril\ and R\"odl now follows easily.

\begin{corollary}
The class of all posets has the Ramsey Property.
\end{corollary}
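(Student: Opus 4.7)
The plan is to reduce the Ramsey property for arbitrary posets to the Ramsey property for grids, which has just been established in Proposition~\ref{gridramsey}. The key observation is that any finite poset embeds as a subposet into a sufficiently large grid of the appropriate dimension, so a monochromatic copy of such a grid inside a larger grid automatically contains a monochromatic copy of the original poset.

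More concretely, let $P$ be an arbitrary finite poset and let $r$ be a positive integer; I need to exhibit a poset $Q$ such that every $r$-coloring of the comparabilities of $Q$ admits a monochromatic copy of $P$. Let $t=\dim(P)$ be the Dushnik--Miller dimension of $P$ and let $k=|P|$. Using a Dushnik--Miller realizer of $P$ to map each element $x\in P$ to the tuple of its positions in the $t$ linear orders (exactly as in the discussion of casual embeddings preceding the proof of Proposition~\ref{gridramsey}), one obtains an embedding of $P$ into $\kk^t$. Apply Proposition~\ref{gridramsey} to the grid $\kk^t$ and to $r$ colors to obtain a Ramsey grid $Q\cong\nn^t$.

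Now verify that $Q$ works for $P$. Given any $r$-coloring of the comparabilities of $Q$, Proposition~\ref{gridramsey} produces a subposet $G\subseteq Q$ with $G\cong\kk^t$ all of whose comparabilities receive the same color. Composing the isomorphism $G\cong\kk^t$ with the embedding $P\hookrightarrow\kk^t$ yields a subposet $P'\subseteq G\subseteq Q$ isomorphic to $P$. Every comparability of $P'$ is a comparability of $G$ and therefore receives the common color; hence $P'$ is the desired monochromatic copy of $P$.

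There is essentially no obstacle beyond checking that the embedding step is legitimate: I need $\kk^t$ to contain a copy of $P$ as a subposet (not merely as a subgrid), which is exactly what the Dushnik--Miller realizer provides. Once that is in hand, the proof is a one-line invocation of Proposition~\ref{gridramsey}, and the class of all posets inherits the Ramsey property from the class of grids.
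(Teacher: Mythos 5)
Your proof is correct and follows essentially the same route as the paper: embed $P$ into a grid and then invoke Proposition~\ref{gridramsey} for that grid. The only (immaterial) difference is the choice of intermediate grid --- the paper embeds $P$ into a Boolean lattice $\two^d$, while you embed it into $\kk^t$ with $t=\dim(P)$ via a Dushnik--Miller realizer, which is in fact the embedding the paper itself uses for the subsequent corollary on posets of bounded dimension.
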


\begin{proof}
Let $P$ be a poset. It is well-know that every poset is a subposet of a large enough Boolean lattice. The Boolean lattice of dimension $d$ is the grid $\mathbf{2}^d$.

So first find a Boolean lattice $B$ such that $P$ is a subposet of $B$. Then use Proposition~\ref{gridramsey} to find a grid $Q$, a Ramsey Poset for $B$. A monochromatic subposet $B$ clearly contains a monochromatic $P$, so the theorem follows.
\end{proof}

Furthermore, since every poset of Dushnik--Miller dimension $d$ can be embedded into $\kk^d$ for sufficiently large $k$, the following corollary is immediate.

\begin{corollary}
The class of posets of dimension at most $d$ has the Ramsey Property.
\end{corollary}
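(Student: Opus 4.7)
The plan is to reduce the statement to the grid case already handled by Proposition~\ref{gridramsey}. Given a poset $P$ with $\dim(P)\leq d$, the definition of Dushnik--Miller dimension supplies a realizer $\{L_1,\ldots,L_d\}$ whose intersection is $P$, and coordinatizing each $x\in P$ by the tuple of its positions in $L_1,\ldots,L_d$ gives an embedding of $P$ into $\kk^d$ for any $k\geq |P|$. So the first step is to fix such a $k$ and regard $P$ as a subposet of the grid $\kk^d$.

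Next I would invoke Proposition~\ref{gridramsey} with $t=d$ applied to the grid $\kk^d$ and the given number of colors $r$. This produces a grid $Q\cong \nn^d$ with the property that every $r$-coloring of the comparabilities of $Q$ admits a monochromatic subposet isomorphic to $\kk^d$. The one containment that needs checking is that $Q$ itself lies in the class under consideration, and this is immediate: a $d$-dimensional grid has Dushnik--Miller dimension at most $d$.

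Finally, for any $r$-coloring of the comparabilities of $Q$, the monochromatic copy of $\kk^d$ supplied by Proposition~\ref{gridramsey} contains the embedded copy of $P$, and the comparabilities of this $P$ are a subset of those of $\kk^d$, so they all receive the same color. Hence $Q$ is a Ramsey poset for $P$ within the class of posets of dimension at most $d$. There is no real obstacle here beyond making sure the Ramsey grid stays inside the class, which is automatic because Proposition~\ref{gridramsey} already delivers a $d$-dimensional grid; the corollary is indeed essentially immediate once one notes that $P\hookrightarrow \kk^d$ and that the existing proof for grids preserves dimension.
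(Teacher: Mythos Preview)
Your proof is correct and is exactly the argument the paper has in mind: embed $P$ into $\kk^d$ via a realizer, apply Proposition~\ref{gridramsey} to get a Ramsey grid $Q\cong\nn^d$ (which stays in the class since $\dim(\nn^d)\le d$), and then any monochromatic copy of $\kk^d$ in $Q$ contains a monochromatic copy of $P$. The paper simply declares the corollary ``immediate'' from the embedding $P\hookrightarrow\kk^d$ without writing out these steps, so your proposal is a faithful expansion of the intended proof.
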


(Of course the corollary remains true if one replaces ``at most'' with ``exactly''.)

Unfortunately none of the tools used here seem to be capable of grasping the complexities of planar posets, so the truth value of Statement~\ref{sta:np2} remains open.

\section{Ramsey Theory of grid subposets}\label{s:subposet}

During this research, we found a statement that would have powerful consequences. It is not a straight generalization of the Product Ramsey Theorem, but it seems to be more useful in many cases. Although the authors disagree on the truth value, for easier discussion we state it as a conjecture.

\begin{conjecture}\label{con:diamond}
For all $t$, $r$, $m$, and $l$ there exists an $n$ such that for all $r$-colorings of the $\mm^t$ subposets of $\nn^t$, there is a monochromatic $\ll^t$ subposet $L$. That is, every $\mm^t$ subposet of $L$ receives the same color.
\end{conjecture}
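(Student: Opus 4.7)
The plan is to combine the Product Ramsey Theorem with a classification of the ways $\mm^t$ can embed into $\nn^t$ as a subposet. Unlike a subgrid, an $\mm^t$ subposet need not arise as a product of $t$ subsets of $[n]$; in a general order embedding $f : \mm^t \hookrightarrow \nn^t$, distinct points of $\mm^t$ may share coordinate values. My first step is to enumerate the finitely many \emph{embedding types}, which record, for each of the $t$ coordinates, the pattern of equalities and strict inequalities among the $m^t$ image coordinate values. Let $N = N(m,t)$ be the number of such types; crucially $N$ depends only on $m$ and $t$, not on $n$.

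For each fixed type $\tau$, an $\mm^t$ subposet of that type is parameterized by a tuple of subsets of $[n]$ (one per coordinate) subject to the combinatorial pattern prescribed by $\tau$, in direct analogy with the parameterization of subgrids by $t$ subsets of size $m$. I would then apply the Product Ramsey Theorem, or a mild reformulation of it obtained by aggregating coordinates appropriately, once per type, iteratively shrinking the ambient subgrid. After $N$ applications this yields a subgrid $\mathbf{l_0}^t$ inside $\nn^t$ in which, for every type $\tau$, all $\mm^t$ subposets of type $\tau$ that lie inside it share a single color $c_\tau$. Taking $n$ large enough at the start ensures that $l_0 \geq l$ after all the shrinking.

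The main obstacle is to prove that $c_{\tau_1} = c_{\tau_2}$ for every pair of types. My approach would be to construct \emph{bridge configurations}: within the subgrid, exhibit pairs of $\mm^t$ subposets of different types that either share most of their points or sit together inside a slightly enlarged substructure whose internal consistency forces the two colors to coincide. Chaining bridges across all pairs would identify every $c_\tau$ and deliver the required monochromatic $\ll^t$ subposet. For $t = 2$ the catalogue of types is small and the planar geometry is flexible enough that bridges can plausibly be built by hand for each pair, which is presumably the route for the special case proved afterwards. For general $t$ the number of types grows rapidly and multidimensional bridges become substantially more rigid, so it is far from clear that this strategy can be completed, which is plausibly why the authors remain divided on the truth value and record the general statement only as a conjecture.
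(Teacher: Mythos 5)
First, a point of fact: the statement you are proving is stated in the paper only as a \emph{conjecture}. The authors do not prove it; they prove only the special case $t=2$ (Theorem~\ref{diamond}), they record that they themselves disagree about its truth value, and they reproduce a suggestion of Trotter for a counterexample at $t=3$. So any complete proof of the general statement would go beyond the paper. Your proposal, however, is not such a proof: its first half (enumerating embedding types and iterating the Product Ramsey Theorem once per type to make each type monochromatic in its own color $c_\tau$) is workable, but the second half --- the ``bridge configurations'' forcing $c_{\tau_1}=c_{\tau_2}$ --- is exactly the crux, and it cannot work in the form you describe. Two $\mm^t$ subposets that overlap in many points are under no obligation to receive the same color; a coloring assigns a color to a subposet as a whole, and overlap creates no constraint. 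More decisively, the coloring that assigns to each $\mm^t$ subposet its own embedding type is a legitimate finite coloring under which every type is already monochromatic with the distinct color ``$\tau$''; any sufficiently large subgrid contains subposets of several types, so no amount of bridging inside a subgrid can ever identify the colors. This is precisely the coloring behind Trotter's proposed counterexample for $t=3$. The only way the conjecture can hold is if the monochromatic copy of $\ll^t$ is \emph{not} a subgrid but a copy in which only one embedding type of $\mm^t$ occurs, and your proposal never produces such a copy.

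For $t=2$ the paper's actual route is different from yours and avoids bridging entirely. It applies the Product Ramsey Theorem once, to $\MM^2$ subgrids with $M=m^2$, coloring each subgrid by the $c_1$-color of its \emph{core} (the image of the essentially unique casual embedding of $\mm^2$, Lemma~\ref{lemma:uniqueembedding}). The monochromatic $\LL^2$ subgrid then has a core $P'\cong\ll^2$ in which no two points share a coordinate, and --- because $\ss^2$ has a \emph{unique} two-element realizer (Lemma~\ref{lemma:uniquerealizer}) --- every $\mm^2$ subposet $D$ of $P'$ is itself the core of the $\MM^2$ subgrid spanned by its coordinate projections. So only one ``type'' of $\mm^2$ subposet occurs inside $P'$, and its color is pinned down by the subgrid coloring. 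This is exactly the step that has no analogue for $t\geq 3$, where $\mathbf{2}^t$ has several fundamentally different realizers; that is why the general statement remains a conjecture and why your type-by-type reduction, while a reasonable first move, leaves the essential difficulty untouched.
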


Note that the difference between the Product Ramsey Theorem and this conjecture is that this conjecture replaces ``subgrids'' in the Product Ramsey Theorem with the more general ``subposets''.

We were able prove this conjecture for $t=2$.

\begin{theorem}\label{diamond}
For all $r$, $m$, and $l$ there exists an $n$ such that for all $r$-colorings of the $\mm^2$ subposets of $\nn^2$, there is a monochromatic $\ll^2$ subposet $P$. That is, every $\mm^2$ subposet of $P$ receives the same color.
\end{theorem}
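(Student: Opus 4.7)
My plan is to apply the Product Ramsey Theorem to an auxiliary type-coloring of subgrids of $\nn^2$, and then to extract the desired $\ll^2$ subposet from within the resulting uniform region.

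First, fix $s=m^2$, which is large enough that every $\mm^2$ subposet of $\nn^2$ sits inside some $\ss^2$ subgrid (any $\mm^2$ subposet uses at most $m^2$ distinct coordinates in each direction). For each $\ss^2$ subgrid $R\subset\nn^2$, canonically identify $R$ with $\ss^2$ via the orderings of its two coordinate sets; this induces a bijection between the $\mm^2$ subposets that lie inside $R$ and the (finite) set $\mathcal{Q}$ of $\mm^2$ subposets of the abstract poset $\ss^2$. Define the \emph{type} of $R$ to be the function $\mathcal{Q}\to[r]$ assigning each abstract $\mm^2$-position its color under the given coloring. There are at most $r^{|\mathcal{Q}|}$ possible types, a finite number depending only on $m$.

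Next, I would apply the Product Ramsey Theorem with parameters $t=2$, $m'=s$, $r'=r^{|\mathcal{Q}|}$, and outer parameter chosen sufficiently large for the final step. This yields an $n$ and a large subgrid $L\subset\nn^2$ inside which every $\ss^2$ sub-subgrid has the same type $T$. Consequently, within $L$, the color of any $\mm^2$ subposet is determined by its abstract position inside any ambient $\ss^2$ sub-subgrid, and since the same concrete subposet typically sits inside several $\ss^2$ sub-subgrids at different abstract positions, $T$ must be consistent across these presentations. The coloring inside $L$ is thus, in a strong sense, determined only by the ``intrinsic shape'' of each $\mm^2$ subposet.

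The main obstacle will be the final step: using this uniformity to locate an $\ll^2$ subposet $P\subset L$ whose $\mm^2$ sub-subposets are all monochromatic. A naive choice of $P$ as an $\ll^2$ sub-subgrid of $L$ need not suffice, since $T$ may still assign different colors to $\mm^2$ subposets of different shapes, and every such shape typically occurs inside an $\ll^2$ sub-subgrid. I expect this step to exploit the two-dimensional structure essentially --- either by iterating the Product Ramsey Theorem within $L$ on progressively finer sub-configurations to force additional uniformity across shapes, or by a direct greedy construction of $P$ one chain (or row) at a time, using pigeonhole and Ramsey-style arguments to control the shapes of $\mm^2$ sub-subposets realized. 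The fact that the analogous conjecture remains open for $t\ge 3$ strongly suggests that this last step depends on features peculiar to the 2-dimensional setting.
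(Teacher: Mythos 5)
You correctly reduce the problem, via a type-coloring of the $\ss^2$ subgrids with $s=m^2$ and the Product Ramsey Theorem, to a large subgrid $L$ in which the color of an $\mm^2$ subposet depends only on its abstract position inside the $\ss^2$ subgrid it spans. But the step you flag as the ``main obstacle'' is exactly the missing idea, and the proposal does not supply it: you need a way to choose the final $\ll^2$ copy so that \emph{all} of its $\mm^2$ subposets occupy the \emph{same} abstract position in their respective ambient subgrids. As you observe, no subgrid choice can work, and neither iterating Product Ramsey nor a greedy row-by-row construction will make the type $T$ constant across shapes --- the adversary can genuinely color different shapes differently, so the resolution cannot be to force more uniformity of $T$; it must be to restrict to a copy of $\ll^2$ in which only one shape ever occurs.

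The paper's resolution is to take the final $P$ to be a ``diagonally'' embedded copy: a \emph{casual} embedding of $\ll^2$ into an $(l^2)\times(l^2)$ subgrid, meaning no two points share a first or a second coordinate and every coordinate value is used exactly once. Two facts, both special to $t=2$, make this work. First, an $\ss^2$ grid has a unique two-element realizer (the lexicographic and colexicographic orders, Lemma~\ref{lemma:uniquerealizer}), whence the casual embedding of $\ss^2$ into $(\mathbf{s^2})^2$ is unique up to automorphism and its image (the ``core'') is a canonical point set (Lemma~\ref{lemma:uniqueembedding}). Second, if $P$ is such a casual $\ll^2$, then any $\mm^2$ subposet $D$ of $P$ uses exactly $m^2$ distinct coordinates in each direction, so the subgrid it spans is exactly $\MM^2$ with $M=m^2$, and $D$ is casually embedded there --- hence, by uniqueness, $D$ \emph{is} the core of that subgrid. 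So every $\mm^2$ subposet of $P$ sits in the same abstract position, and your uniform type $T$ (in fact, only the single value of $T$ at the core position is needed, which is why the paper colors each $\MM^2$ subgrid simply by the color of its core rather than by its full type) gives monochromaticity. Without this uniqueness-of-the-casual-copy lemma the argument does not close, and its failure for $t\ge 3$ is precisely why the general conjecture remains open.
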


We break up the proof into smaller parts. The following lemma is interesting in its own right.

\begin{lemma}\label{lemma:uniquerealizer}
Let $P$ be an $\ss^2$ grid, whose ground set is represented by ordered pairs $(i,j)$, with $0\leq i,j\leq s-1$. Then $P$ has only one realizer with two linear extensions. Namely, one linear extension of this realizer is the lexicographic order on the pairs of $P$, and the other is the colexicographic order (the coordinates are considered right-to-left).
\end{lemma}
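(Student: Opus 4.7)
The plan is to first reduce the problem to showing that the only possible choices for $L_1$ in a size-2 realizer $\{L_1,L_2\}$ of $P=\ss^2$ are lex and colex. Note that $L_2$ is determined by $L_1$: on comparable pairs both extensions agree with $P$, and on each incomparable pair they must order the elements oppositely (else the intersection strictly contains $P$). The pair $(\text{lex},\text{colex})$ is easily verified to be a realizer, since they disagree on every incomparable pair while agreeing with $P$ on comparable pairs. Exploiting the coordinate-swap automorphism $(i,j)\mapsto(j,i)$ of $P$, which interchanges lex and colex, I may assume $(0,1)<_{L_1}(1,0)$ and aim to conclude $L_1$ is lex.

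The engine is a short propagation principle: if $x\parallel y$ in $P$ with $x<_{L_1}y$, and $x<_P x'$ with $x'\parallel y$, then $x'<_{L_1}y$. For a contradiction, suppose instead $y<_{L_1}x'$. Reversal on incomparables gives $x'<_{L_2}y$, while $x<_P x'$ gives $x<_{L_2}x'$, producing $x<_{L_2}y$. But the hypothesis $x<_{L_1}y$ reverses to $y<_{L_2}x$, giving $x<_{L_2}x$, a contradiction.

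I apply the principle iteratively starting from $(0,1)<_{L_1}(1,0)$. First, raising $x$ from $(0,1)$ to $(0,j)$, which remains incomparable with $(1,0)$ for every $j\ge 1$, yields $(0,j)<_{L_1}(1,0)$. Chaining through the comparabilities $(1,0)\le_P(i,0)\le_P(i,j')$ gives $(0,j)<_{L_1}(i,j')$ for all $i\ge 1$ and $0\le j'<j$. A second application, raising $x$ from $(0,j)$ to $(i'',j)$ for $0\le i''<i$, produces $(i'',j)<_{L_1}(i,j')$ for every incomparable pair with $i''<i$ and $j>j'$. This is precisely the lex order on incomparable pairs, so $L_1$ is lex, $L_2$ is colex, and the realizer is unique.

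I expect the main obstacle to be selecting intermediate elements so that the incomparability hypothesis of the propagation principle is preserved along each chain of applications. In $\ss^2$ this is a short bookkeeping exercise since the row-$0$ and column-$0$ chains furnish all intermediates we need, but the same approach does not obviously extend to $\ss^t$ for $t\ge 3$, where incomparability is a less restrictive condition and one may fail to navigate between a given pair via purely incomparable intermediates.
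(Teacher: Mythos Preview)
Your argument is correct. The propagation principle is sound (it is, in essence, a two-step alternating-cycle obstruction), and your two passes---first pushing $(0,1)$ up its column to get $(0,j)<_{L_1}(1,0)$, then using $(1,0)\le_P(i,j')$ and a second propagation along row $j$---cleanly determine $L_1$ on every incomparable pair. The symmetry reduction via the coordinate swap is also fine; one could equally just swap the labels $L_1,L_2$.

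The paper takes a somewhat different route. Rather than propagating from a single seed, it writes down two explicit families of critical pairs,
\[
I_1=\{((i+1,0),(i,s-1)):0\le i\le s-2\},\qquad I_2=\{((0,j+1),(s-1,j)):0\le j\le s-2\},
\]
and observes that any pair from $I_1$ together with any pair from $I_2$ forms an alternating cycle, so no linear extension can reverse a pair from each. Hence one extension reverses all of $I_1$ and the other all of $I_2$; reversing all of $I_1$ forces each column to precede the next, pinning down lex, and dually for $I_2$ and colex. Your approach is a bit more elementary and entirely self-contained, needing no familiarity with alternating cycles as a concept; the paper's approach is shorter once one has that vocabulary, and makes the obstruction structure more visible at a glance. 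Both rely on the same underlying fact---that in a size-two realizer the second extension is the reversal of the first on incomparable pairs---and, as you note, neither argument extends directly to $\ss^t$ for $t\ge 3$.
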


\begin{proof}
Define the following two sets of ordered pairs of incomparable elements in $P$.
\begin{align*}
I_1&=\big\{\big((1,0),(0,s-1)\big),\big((2,0),(1,s-1)\big),\ldots,\big((s-1,0),(s-2,s-1)\big)\big\}\\
I_2&=\big\{\big((0,1),(s-1,0)\big),\big((0,2),(s-1,1)\big),\ldots,\big((0,s-1),(s-1,s-2)\big)\big\}
\end{align*}

Now let $(x_1,y_1)\in I_1$ and $(x_2,y_2)\in I_2$ be two arbitrary elements of these sets. With appropriate choices of $i,j\in\{0,\ldots,s-1\}$, we have
\begin{gather*}
x_1=(i+1,0),\qquad y_1=(i,s-1),\\
x_2=(0,j+1),\qquad y_2=(s-1,j).
\end{gather*}

Notice $(x_1,y_1)$ and $(x_2,y_2)$ cannot be reversed at the same time in a linear extension: indeed, they form an alternating cycle, because $x_1\leq y_2$, and $x_2\leq y_1$. So every pair in $I_1$ must be reversed in a single linear extension, and the same is true for $I_2$. There is only one linear extension that reverses every pair in $I_1$, and there is only one for $I_2$.

The linear extension that reverses all of $I_1$ is the lexicographic order of the pairs, and the one that reverses $I_2$ is the colexicographic order.
\end{proof}

The key observation to the proof of Theorem~\ref{diamond} is the following.

\begin{lemma}\label{lemma:uniqueembedding}
Let $P$ be an $\ss^2$ grid, and let $Q$ be an $\ll^2$ grid with $l=s^2$. There is (up to automorphism) only one casual embedding of $P$ into $Q$. That is, if $f,g:P\to Q$ are two casual embeddings, then $f(P)=g(P)$, and $f\circ g^{-1}$ (and $g\circ f^{-1}$) are automorphisms of $P$.
\end{lemma}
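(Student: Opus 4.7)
The plan is to reduce the lemma to the classification of $2$-realizers proved in Lemma~\ref{lemma:uniquerealizer}. Since $|P|=s^2=l$, the casuality condition forces each composition $\Proj_i\circ f\colon P\to\{1,\ldots,l\}$ to be a bijection, and so each one induces a linear order $\prec_i$ on $P$ via $x\prec_i y$ iff $\Proj_i(f(x))<\Proj_i(f(y))$.

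First I would verify that $\{\prec_1,\prec_2\}$ is in fact a realizer of $P$. Each $\prec_i$ is a linear extension of $P$ because $f$ is order-preserving. For an incomparable pair $x,y\in P$, the images $f(x),f(y)$ must be incomparable in $Q$ since $f$ is an order embedding; in the $2$-dimensional grid $Q$ this forces $\Proj_1$ and $\Proj_2$ to disagree in sign on $\{x,y\}$, so exactly one of $\prec_1,\prec_2$ reverses the pair. Hence $\{\prec_1,\prec_2\}$ is a realizer of size~$2$.

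Next I would apply Lemma~\ref{lemma:uniquerealizer}: the only $2$-realizer of $\ss^2$ is $\{\mathrm{lex},\mathrm{colex}\}$. Therefore as an ordered pair $(\prec_1,\prec_2)$ is either $(\mathrm{lex},\mathrm{colex})$ or $(\mathrm{colex},\mathrm{lex})$. Since in a casual embedding the $i$th coordinate of $f(x)$ is by definition the position of $x$ in $\prec_i$, the embedding $f$ is fully determined by the ordered pair $(\prec_1,\prec_2)$. Thus there are at most two casual embeddings $f,g\colon P\to Q$, and they are related by $g=f\circ\phi$, where $\phi$ is the coordinate-swap automorphism of $P$ (because swapping coordinates in $P$ interchanges its lex and colex orders). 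Consequently $g(P)=f(P)$, and $f^{-1}\circ g=\phi$ is an automorphism of $P$, as desired.

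I do not expect a significant obstacle; the lemma is essentially a one-step corollary of Lemma~\ref{lemma:uniquerealizer}. The only step needing a touch of care is verifying that the two induced linear orders genuinely form a realizer, which is precisely where the embedding (order-reflecting) property and the $2$-dimensionality of $Q$ are used.
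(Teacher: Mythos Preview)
Your argument is correct and follows essentially the same route as the paper: both reduce to Lemma~\ref{lemma:uniquerealizer} by observing that a casual embedding $f$ induces, via $\Proj_1\circ f$ and $\Proj_2\circ f$, an ordered $2$-realizer of $P$, and that $f$ is recovered from this ordered pair. The only cosmetic difference is in the final case: the paper writes out the explicit formulas $f((i,j))=(si+j,sj+i)$ and $g((i,j))=(sj+i,si+j)$ and checks $f(P)=g(P)$ pointwise, whereas you phrase the same fact as $g=f\circ\phi$ with $\phi$ the coordinate-swap automorphism of $P$ (which works because swapping coordinates exchanges lex and colex).
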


\begin{proof}
We can think of $Q$ as consisting of pairs $(i,j)$ with $0\leq i,j\leq l-1$ with the natural order. We will use the usual projection functions $\Proj_1((i,j))=i$, and $\Proj_2((i,j))=j$.

Let $f$ be a casual embedding of $P$ into $Q$.
Let $k\in\{1,2\}$, and $h_k=\Proj_k\circ f$. The definition of a casual embedding exactly means that $h_k$ is a bijection from $P$ to $\{0,\ldots,l-1\}$. Now consider the following linear order of the elements of $P$.
\[
L_k=(h_k^{-1}(0),h_k^{-1}(1),\ldots,h_k^{-1}(l-1))
\]
First note that $L_k$ is a linear extension of $P$, because $f$ is an embedding. Due to the same reason, any pair of incomparable elements will be ordered opposite in $L_1$ and $L_2$, so $\{L_1,L_2\}$ is a realizer of $P$. It is also important to recall that the ordered pair of linear extensions $(L_1,L_2)$ uniquely determines the casual embedding $f$. (See discussion after the statement of Proposition~\ref{gridramsey}.)

Now let $g$ be another casual embedding of $P$ into $Q$, and let $M_1,M_2$ be the linear extensions determined by $g$, similarly as above. Since $\{M_1,M_2\}$ is also a realizer of $P$, Lemma~\ref{lemma:uniquerealizer} implies that either $L_1=M_1$ and $L_2=M_2$, or $L_1=M_2$ and $L_2=M_1$.

The former case is simple: since $(L_1,L_2)$ uniquely determines $f$, and $(M_1,M_2)$ uniquely determines $g$, the fact $(L_1,L_2)=(M_1,M_2)$ implies that $f=g$.

So now assume that $L_1=M_2$, and $L_2=M_1$. By Lemma~\ref{lemma:uniquerealizer}, $L_1$ is either the lexicographic order, or the colexicographic order of $P$. By possibly swapping the roles of $f$ and $g$, we may assume it is the former. With these assumptions, $f$ and $g$ are completely determined. It is easy to see that
\[
f((i,j))=(si+j,sj+i)
\qquad\text{and}\qquad
g((i,j))=(sj+i,si+j).
\]

If $(a,b)\in f(P)$, say $(a,b)=f((i,j))$, then $(a,b)=g((j,i))$, so $(a,b)\in g(P)$, and the converse follows the same way. This shows $f(P)=g(P)$. The last part of the statement follows from the fact that composition of isomorphisms is an isomorphism.
\end{proof}

If the conditions of Lemma~\ref{lemma:uniqueembedding} are satisfied, then the subposet induced by the image $f(P)$ under a casual embedding of $P$ into $Q$ will be called the \emph{core} of $Q$. Lemma~\ref{lemma:uniqueembedding} shows that the core of $Q$ is uniquely determined by $Q$, so the usage of the definite article is justified.

\begin{proof}[Proof of Theorem~\ref{diamond}]
Let $M=m^2$, and $L=l^2$. By the Product Ramsey Theorem, there exists a positive integer $n$ such that for all $r$-colorings of the $\MM^2$ subgrids of $\nn^2$, there is a monochromatic $\LL^2$ subgrid. We claim that $n$ satisfies the requirements of our theorem.

To see this, let $c_1$ be an $r$-coloring of the $\mm^2$ subposets of $\nn^2$. We will define an $r$-coloring $c_2$ on the $\MM^2$ subgrids of $\nn^2$. For each $Q\cong\MM^2$ subgrid, let $P$ be the core of $Q$. Let $c_2(Q)=c_1(P)$.

As noted earlier, the $\nn^2$ grid has a monochromatic $\LL^2$ subgrid under the coloring $c_2$; call this $Q'$. Here, monochromatic means that there exists a color $r_0$, such that for every $\MM^2$ subgrid $G$ of $Q'$, we have $c_2(G)=r_0$. Let $P'$ be the core of $Q'$ (see Figure~\ref{fig:dcd}).

\begin{figure}
\begin{center}
\ifbw
\tikzset{core/.style={dashed},large/.style={ultra thick}}
\else
\tikzset{core/.style={red},large/.style={ultra thick,blue}}
\fi
\begin{tikzpicture}
[scale=0.3]
\foreach \i in {0,...,8} {
  \draw (-\i,\i) -- (8-\i, 8+\i);
  \draw (\i,\i) -- (\i-8,\i+8);
  }

\foreach \i in {0,...,2} {
  \draw[core] (-2*\i,4*\i) -- (4-2*\i,8+4*\i);
  \draw[core] (2*\i,4*\i) -- (2*\i-4,4*\i+8);
  }

\draw[large] (0,0) -- (8,8);
\draw[large] (-2,2) -- (6,10);
\draw[large] (-3,3) -- (5,11);
\draw[large] (-8,8) -- (0,16);
\draw[large] (0,0) -- (-8,8);
\draw[large] (1,1) -- (-7,9);
\draw[large] (6,6) -- (-2,14);
\draw[large] (8,8) -- (0,16);

\fill (0,0) circle [radius = 0.3];
\fill (-2,4) circle [radius = 0.3];
\fill (4,8) circle [radius = 0.3];
\fill (0,16) circle [radius = 0.3];
\end{tikzpicture}
\caption{\label{fig:dcd}For the proof of Theorem~\ref{diamond}. In this figure, $m=2$, $l=3$. The $9\times 9$ grid is $Q'$, the core $P'$ is \ifbw dashed. \else red. \fi The points of the subposet $D$ are marked by black dots. The \ifbw thick \else blue \fi grid is $S$.}
\end{center}
\end{figure}
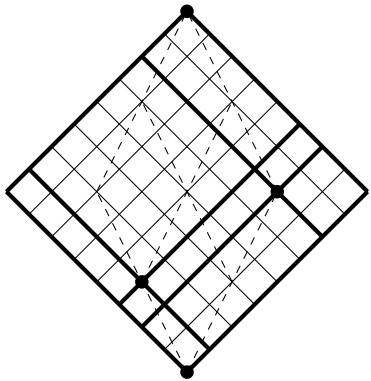

Clearly, $P'\cong\ll^2$. It remains to be seen that every $\mm^2$ subposet of $P'$ received the same color under $c_1$.

Let $D$ be an arbitrary $\mm^2$ subposet in $P'$. Let
\[
S_1=\{\Proj_1(x):x\in D\}\qquad S_2=\{\Proj_2(x):x\in D\}.
\]
where $\Proj_i(x)$ is the $i$th coordinate of $x$ in $\nn^2$. Let $S=S_1\times S_2$. Since $P'$ is a casually embedded copy, $|S_i|=|D|=m^2=M$, so $S\cong \MM^2$, a subgrid of $Q'$. Therefore $c_2(S)=r_0$. On the other hand, $D$ is the core of $S$, so $c_1(D)=r_0$, which finishes the proof.
\end{proof}

Clearly, the techniques used here heavily rely on the fact that $t=2$. E.g.\ Lemma~\ref{lemma:uniquerealizer} and Lemma~\ref{lemma:uniqueembedding} are not true for $t>3$. But the conjecture may still be correct.

Trotter \cite{trotter-personal} suggested that the conjecture is false for $t=3$, offering the following idea for a counterexample.
Let $t=3$, $r=2$, $m=2$, and $l=8$. Suppose the conjecture is true, and there is an appropriate $n$. Now color the $\mm^t=\mathbf{2}^3$ subposets of $\nn^3$ as follows. For a subposet $P$, consider the coordinates of the points in the $\nn^3$. If there is no tie, then the coordinates define a realizer of $P$. There are multiple fundamentally different realizers of $\mathbf{2}^3$, so color $P$ based on what kind of realizer its embedding defines. It does not matter how we group the realizer types into the two color groups, only that there exist two fundamentally different realizers that are colored different. If there is a tie in the coordinates, color $P$ arbitrarily.

Now suppose we have a monochromatic $\ll^3=\mathbf{8}^3$. Within this $\mathbf{8}^3$, we can find both types of realizers, so it cannot be monochromatic.

However, it is not clear that this counterexample works. We conflate the
embeddings into the $\mathbf{8}^3$ with the embedding into the $\nn^3$. For any given $\mathbf{2}^3$, these two embeddings could be fundamentally different in terms of the realizers they generate. It may be possible to find a weird $\mathbf{8}^3$ such that every $\mathbf{2}^3$ subposet of that one has the same type of realizer generated, when we consider their embedding into the $\nn^3$. Indeed, this is another interesting Ramsey-theoretical question.

\section{Matching linear extensions}\label{s:extension}

As evidenced in Section~\ref{s:subposet}, it is interesting to consider linear extensions of posets, and how they behave in Ramsey-theoretical questions.

\begin{theorem}\label{theorem:ptw}
Let $X$ be a poset and let $M$ be a linear extension of $X$. Furthermore, let $k$ be a positive integer. Then there exists a grid $Y\cong \nn^t$ such that for all $L_1,L_2,\ldots,L_k$ linear extensions of $Y$, there is a subposet $X'$ of $Y$ such that
\begin{itemize}
\item $X'\cong X$, evidenced by the embedding $f:X\to Y$;
\item for all $i=1,\ldots,k$, and for all $a,b\in X$, we have $a<b$ in $M$ if and only if $f(a)<f(b)$ in $L_i$.
\end{itemize}
\end{theorem}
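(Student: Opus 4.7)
My plan is to apply the Product Ramsey Theorem to canonicalize the behavior of $L_1, \ldots, L_k$ on a large subgrid of $\nn^t$, and then to construct the desired copy of $X$ in that canonical subgrid by exploiting the freedom of a high-dimensional grid.

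First, I would embed $X$ into a small grid $\ss^t$ in a way compatible with $M$: fix a Dushnik--Miller realizer $\{M = M_1, \ldots, M_d\}$ of $X$ (with $d = \dim X$) and use position vectors $(\mathrm{pos}(x; M_i))_{i=1}^d$ to obtain an order-embedding $\phi \colon X \to \ss^d$ with $s = |X|$, such that $a < b$ in $M$ iff the first coordinate of $\phi(a)$ is strictly less than that of $\phi(b)$. Choose $t \geq d$ sufficiently large (depending on $k$ and $|X|$) and pad $\phi$ by constant coordinates to land in $\ss^t$. Then, given arbitrary linear extensions $L_1, \ldots, L_k$ of $\nn^t$, I color each $\ss^t$-subgrid $G \subseteq \nn^t$ by the $k$-tuple $(\lambda_1^G, \ldots, \lambda_k^G)$ of linear extensions of $\ss^t$ that the $L_i$'s induce on $G$ under the canonical coordinate-preserving identification $G \cong \ss^t$. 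Since the color space is finite, the Product Ramsey Theorem yields, for $n$ sufficiently large, a monochromatic $\ll^t$-subgrid $S$ on which every $\ss^t$-subgrid induces the same tuple $(\lambda_1, \ldots, \lambda_k)$.

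In the spirit of Lemma~\ref{lemma:uniquerealizer}, this uniformity forces $L_i|_S$ to be a canonical linear extension of $S \cong \ll^t$---the unique linear extension of $\ll^t$ that induces $\lambda_i$ on every $\ss^t$-subgrid. Within $S$ I would then construct an order-embedding $f \colon X \to S$ matching $M$ under each canonical $L_i|_S$. The crucial combinatorial observation is that in high enough dimension, an incomparable pair need not be oriented oppositely by distinct canonical extensions; for instance, in $\mathbf{3}^3$ the incomparable pair $(0,2,0)$, $(1,0,1)$ is both lex-less and colex-less simultaneously. A careful coordinate-by-coordinate placement, using $t$ large enough relative to $|X|$ and $k$, then realizes all incomparabilities of $X$ consistently with $M$ under every canonical $L_i|_S$.

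The hardest part is this last construction: verifying that a uniform embedding $f$ exists for \emph{every} possible canonical profile $(\lambda_1, \ldots, \lambda_k)$. This likely requires $t$ to grow sufficiently with $k$ and the structural complexity of $X$, and it may benefit from iterating the Product Ramsey Theorem to restrict the canonical profile further.
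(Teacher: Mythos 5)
Your opening move---using the Product Ramsey Theorem to pass to a subgrid on which the restrictions of $L_1,\ldots,L_k$ to every small subgrid look identical---is the same first step the paper takes (the paper colors $\mathbf{2}^t$ subgrids by how each $L_i$ orders their antipodal point pairs, which is a coarser version of your coloring by induced linear extensions). But the proposal stops exactly where the actual proof begins. After uniformization you are left with an arbitrary ``canonical profile'' $(\lambda_1,\ldots,\lambda_k)$, and you must show that \emph{whatever} this profile is, $X$ can be embedded so that every $L_i$ agrees with $M$ on the image. You explicitly defer this (``the hardest part is this last construction''), and nothing in the proposal indicates how to do it; the $\mathbf{3}^3$ example shows that favorable configurations exist, but not that one can always be reached, and the appeal to a Lemma~\ref{lemma:uniquerealizer}-style uniqueness is unavailable here since that lemma is specific to $t=2$ and fails in higher dimension.

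The paper closes this gap with two ingredients that have no counterpart in your outline. First, the uniform colors attached to antipodal pairs of a hypercube induce a $2^k$-coloring of the $2$-partitions of $[t]$, and Rothschild's partition Ramsey theorem (Theorem~\ref{partitionramsey}) is applied to extract an $(s+k)$-partition $\psi$ of the coordinate set all of whose coarsenings to $2$-partitions receive the same color; this is precisely how ``$t$ large enough'' gets used, and it dictates that $t$ must grow via Rothschild's bound, not merely ``with $|X|$ and $k$.'' Second, an alternating-cycle argument shows that this common color must be all-good: if some $L_i$ ordered the antipodal pairs the ``wrong'' way for two different parts $B,C$ of $\psi$, the corresponding pairs $(b,b')$ and $(c,c')$ in a hypercube would form an alternating cycle both of whose pairs are reversed in the single linear extension $L_i$, which is impossible (this is where $s\geq 3$ is needed to have a third part available). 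Only after forcing the all-good color can one write down the embedding, by assigning to the $k+s$ parts of $\psi$ the height functions of $M,\ldots,M$ ($k$ times) and of a realizer of $X$. Without some substitute for these two steps---a mechanism that converts ``uniform but unknown'' behavior of the $L_i$ into ``uniform and coordinate-consistent'' behavior---the proposal does not constitute a proof.
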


Loosely speaking, for any poset $X$ and its linear extension $M$, one can find a large enough grid, so that no matter how we pick a fixed number of linear extensions of that grid, it has an $X$-subposet on which each linear extension conforms with $M$.

A special case of this theorem for $k=1$ was proven by Paoli, Trotter and Walker \cite{PTW}. The way the proof is written in \cite{PTW} contains an error: they attempt to use the infinite version of the Product Ramsey Theorem, which is false. However, the error is easily correctable by just using the finite version, and choosing appropriately large numbers. Our arguments follow their ideas with the necessary correction and generalizations.
One can also prove this result using results of R\"odl and Arman \cite{RodlArman}, but we believe that the proof provided here is more insightful.

We will need the following classical theorem by Rothschild \cite{Rothschild} about partitions. To state this theorem, we will call a partition of a set into $t$ parts, a \emph{$t$-partition}.

\begin{theorem}\label{partitionramsey}
Let $s\leq t$ be positive integers, and $r$ a positive integer. Then there exists a  positive integer $k_0$ such that for all $k\geq k_0$, no matter how one colors the $s$-partitions of $[k]$ with $r$ colors, there exists a monochromatic $t$-partition in the following sense: any $s$-partition generated from that $t$-partition by unifying parts will have the same color.
\end{theorem}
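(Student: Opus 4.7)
The theorem is Rothschild's partition-Ramsey theorem. The plan is to derive it as the empty-alphabet special case of the Graham--Rothschild parameter-word theorem, and to prove the latter by induction using Hales--Jewett.

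First I would encode partitions as parameter words. An $s$-partition of $[k]$ corresponds to a surjection $w\colon[k]\to[s]$ normalized so that the first occurrences of the labels $1,2,\ldots,s$ appear in that left-to-right order; in Graham--Rothschild terminology this is an $s$-parameter word of length $k$ over the empty alphabet. A $t$-partition with $t\geq s$ is similarly a $t$-parameter word, and its $s$-coarsenings (the $s$-partitions obtained by merging parts) correspond bijectively to its ``$s$-subwords,'' obtained by substituting the $t$ parameters by $s$ new parameters through a surjection $[t]\to[s]$. Under this dictionary the theorem becomes: every $r$-coloring of the $s$-parameter words of sufficiently large length (over the empty alphabet) admits a monochromatic $t$-parameter word.

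To prove this, I would prove the more general Graham--Rothschild theorem (allowing an arbitrary finite alphabet of letters) and then specialize. The proof proceeds by a double induction: outer on $s$, inner on the alphabet size $a$. The case $s=1$, $a=0$ is trivial since there is a unique $1$-partition of any set, so any $t$-partition on a ground set of size at least $t$ suffices; the case $s=1$ with $a\geq 1$ is exactly Hales--Jewett. For the inductive step I would, given an $r$-coloring of the $s$-parameter words of length $k$ over an $a$-letter alphabet, encode it as an $r$-coloring of $(s-1)$-parameter words over an enlarged alphabet whose extra letters record the ``last parameter'' substitutions, apply the outer inductive hypothesis (at $s-1$) to obtain a long $t'$-parameter word monochromatic at the $(s-1)$-level, and then apply Hales--Jewett to collapse the auxiliary letters back down and recover monochromaticity at the $s$-level on a large $t$-parameter subword.

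The hard part will be arranging the auxiliary encoding so that a \emph{single} output $t$-parameter word has \emph{all} of its $s$-subwords monochromatic under the original coloring, not merely those detected by a single layer of the induction. This is the essence of the Graham--Rothschild bookkeeping, and it is what forces the bounds on $k_0$ to grow at an Ackermannian rate. In particular, a direct proof of the partition theorem that avoids passing through a nonempty auxiliary alphabet appears difficult: naively ``adding one block at a time'' to a monochromatic $t$-partition produces many new $s$-coarsenings whose colors the earlier step does not control, and only by performing the Hales--Jewett application inside a coloring whose labels record the whole profile of these new coarsenings can one ensure that the final $(t+1)$-partition is monochromatic across all $S(t+1,s)$ coarsenings simultaneously.
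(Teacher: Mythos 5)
The paper does not prove this theorem at all: it is quoted as a classical result from Rothschild's thesis and used as a black box, so there is no in-paper argument to compare yours against. Your reduction is nevertheless the standard one and is correct: an $s$-partition of $[k]$ is precisely an $s$-parameter word of length $k$ over the empty alphabet (a surjection $[k]\to[s]$ normalized by first occurrences), the $s$-coarsenings of a $t$-partition correspond exactly to compositions with surjections $[t]\to[s]$, i.e.\ to its $s$-parameter subwords, and the statement is exactly the $A=\emptyset$ case of the Graham--Rothschild parameter-word theorem. The monotonicity in $k$ (``for all $k\ge k_0$'') deserves a one-line remark --- extend each $s$-partition of $[k_0]$ to one of $[k]$ by absorbing the extra points into a fixed block and pull back the coloring --- but that is routine.

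The gap is that you have not actually proved Graham--Rothschild; you have described its architecture and then, candidly, labeled the decisive step ``the essence of the Graham--Rothschild bookkeeping'' without carrying it out. The inductive step --- re-encoding an $r$-coloring of $s$-parameter words as a coloring of $(s-1)$-parameter words over an enlarged alphabet and then applying Hales--Jewett to eliminate the auxiliary letters while keeping \emph{all} $s$-subwords of a single output word monochromatic --- is precisely where all the work lies, and as written it is a plan rather than an argument. A smaller inaccuracy: in the standard induction the input from Hales--Jewett is the $0$-parameter level (coloring the points of $A^n$ and extracting a monochromatic $m$-parameter subspace by iteration), not the $1$-parameter level; the $s=1$ case over a nonempty alphabet is already a nontrivial consequence, not ``exactly Hales--Jewett''. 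None of this makes the route wrong --- it is the right road, and for the purposes of this paper simply citing Rothschild or Graham--Rothschild--Spencer, as the authors do, is entirely adequate --- but as a self-contained proof the proposal stops short of the crux.
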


Now we are ready to prove Theorem~\ref{theorem:ptw}.

\begin{proof}[Proof of Theorem~\ref{theorem:ptw}]
To start the proof, we pick $X$ and $M$, although we will not use them at all in the first part of the proof. Let $s=\dim(X)$. We may assume that $s\geq 3$, for otherwise $X$ can be embedded into a $3$-dimensional poset, and apply the theorem to that.

We need to show that for large enough $n$ and large enough $t$, the poset $\nn^t$ has the prescribed property. We will determine the exact value of $n$ and $t$ later. For now just let $Y=[n]^t$ for undetermined, but large $n$ and $t$. Then let $L_1,\ldots,L_k$ be linear extensions of $Y$.

In the next steps, we will apply the Product Ramsey Theorem repeatedly to cut down $Y$. To do this, we will color the $\mathbf{2}^t$ subgrids (referred to as \emph{hypercubes}) of $Y$ with $2^k$ colors in each step.

Let $H$ be a hypercube of $Y$. Then $H=C_1\times\cdots\times C_t$, where $C_i=\{a_i,b_i\}$, and $a_i<b_i$. Every point of $H$ is of the form $(c_1,\ldots,c_t)$, where $c_i=a_i$ or $c_i=b_i$. Once we fix an $H$ hypercube in $Y$, we can identify the points with $0$--$1$ strings (bit strings) of length $t$: we write $0$ if $c_i=a_i$, and we write $1$, if $c_i=b_i$.

We call two incomparable points \emph{antipodal}, if they differ in every bit. So the pair $00\ldots0$, $11\ldots1$, is \emph{not} antipodal, but every other pair with differing bits is. We can call the bit strings corresponding to these pairs of antipodal points, antipodal bit strings. There are $2^{t-1}-1$ pairs of antipodal bit strings.

It will be important later that antipodal bit strings bijectively correspond to $2$-partitions of $[t]$. Indeed, for $i\in[t]$, we can place $i$ into the first or second part based on the $i$th bit.

Enumerate every pair of antipodal bit strings one by one. In each step, we will define a coloring of the hypercubes of Y with $2^k$ colors, then use the Product Ramsey Theorem to cut down $Y$ to a smaller grid.

Let $A$ be the current antipodal pair of bit strings. We define a coloring of the hypercubes of $Y$ as follows. Let $H$ be a hypercube. Recall that $A$ identifies a pair of antipodal points $A_H$ in $H$. For $i=1,\ldots,k$, write `G' (good), if $L_i$ orders the points of $A_H$ as it would be natural by the $i$th coordinate of the corresponding bit strings, write `B' (bad) otherwise. We will have constructed a string of length $k$ consisting of G's and B's. This is the ``color'' of $H$ for the antipodal pair $A$.

As an example: suppose the current antipodal pair is 001101, 110010, and the color of the hypercube $H$ is GBG. The antipodal pair 001101, 110010 determines a pair of points $a$, $b$ of $H$, respectively. The color GBG means that $a<b$ in $L_1$, and $b<a$ in $L_2$, and $L_3$. In this example, $t=6$ and $k=3$.

The careful reader may get worried about the case when $k>t$. However this is not a concern. We will see later that we can always choose a larger $t$, so we can ensure that $t\geq k$.

Now we have defined a coloring of the hypercubes of $H$ for $A$. By the Product Ramsey Theorem, as long as $n$ is large enough, there is a monochromatic grid, as large as we prescribe it. For now let us just prescribe a very large grid, and we will determine that size later.

We will replace $Y$ with this monochromatic grid, and we note the color of it. We will assign this color to the 2-partition that corresponds to the antipodal pair $A$. Then we move on to the next antipodal pair, do the coloring of the hypercubes of (the reduced) $Y$, apply the Product Ramsey Theorem, and produce a large monochromatic subgrid. Reduce $Y$ again to this, and move on.

After going through every antipodal pair, we arrive at a final grid $Y$. This has the property that every hypercube in it is uniform with respect to the order of their antipodal points in the linear extensions $L_i$. We also colored every $2$-partition of $[t]$ with $2^k$ colors.


Now we apply Theorem~\ref{partitionramsey} to get $t_0$ such that if $t\geq t_0$, and if we $2^k$-color the $2$-partitions of $[t]$, then we can find a monochromatic $(s+k)$-partition. (Recall that $s=\dim(X)$.) The usage of $t$ here is no accident: indeed, our original $t$ was to be determined this way. Note that $t$ only depends on $k$ and $s$.

We do have a $2^k$ coloring defined on the $2$-partitions of $[t]$, so now we determine the $(s+k)$-partition $\psi$, whose existence was guaranteed above. Recall that no matter how we unify parts in $\psi$ to get a $2$-partition, it will always have the same color $r_0$, which is a string of G's and B's of length $k$. Somewhat magically, it turns out that we can guarantee that $r_0$ is GG\dots G.

To see this, suppose that the $i$th digit of $r_0$ is B. Let $A\in\psi$ be the part for which $i\in A$, and let $B$ and $C$ be two other parts (recall $s\geq 3$, so $s+k\geq 3$). The partitions $\{B,[t]\setminus B\}$ and $\{C,[t]\setminus C\}$ are both colored $r_0$. Let the corresponding antipodal pairs of bit strings be $b$--$b'$, and $c$--$c'$: let $b$ be the bit string that has $1$'s for indices in $B$, and $0$'s for the rest, and $c$ be the bit string that has $1$'s for indices in $C$, and $0$'s for the rest.

Let $H$ be a hypercube in the final, uniformized $Y$. Carry over the notation $b$, $b'$, $c$, $c'$ to denote the points of $H$ corresponding to these bit strings. Then $b\| b'$, $c\|c'$, $b<c'$, and $c<b'$. In other words, $\{(b,b'),(c,c')\}$ is an alternating cycle. Yet, in $L_i$, the order of these pairs are ``bad'', and by the choice $i\in A$, the $i$th digit of the bit string $b$ is $0$, as well as the $i$th digit of the bit string $c$. So in $L_i$, we have $b>b'$, and $c>c'$, a contradiction. In other words, every digit of the color $r_0$ must be G.


We will use the monochromatic, and all-good partition $\psi$ to embed $X$ into $Y$. The parts of $\psi$ are going to be groups of coordinates that are handled together. To do this, we will need $Y$ to be large enough to accommodate the embedding. So it is time to determine $n$. We must have chosen $n$ to be large enough, so that after $2^{t-1}-1$ repeated applications of the Product Ramsey Theorem, the remaining $Y$ is isomorphic to $[n_0]^t$ with $n_0\geq |X|$. Note that $n$ only depends on $k$ and $t$, and since $t$ itself only depends on $k$ and $s$, at the end $n$ also only depends on $k$ and $s$.

Let $M_1=M_2=\cdots=M_k=M$, and let $\{M_{k+1},\ldots,M_{k+s}\}$ be a realizer of $X$. Clearly, each $M_i$ is a linear extension of $X$, and $\cap M_i=X$. Also, label the parts of $\psi$ with $A_1,\ldots,A_{k+s}$ in such a way that $[k]\subseteq A_1\cup\cdots\cup A_k$. For each $x\in X$, find the position of $x$ in $M_i$ (from below). Let this number be $h_i$ (for \emph{height}). Then we map $x$ to the element $(\chi_1,\ldots,\chi_t)$, where $\chi_j=h_l$ if $j\in A_l$. When we specifically want to emphasize the coordinates of the element $x$, we will write $\chi_j(x)$. Clearly, $1\leq\chi_j(x)\leq|X|$, so this is a mapping from $X$ to $Y$. As we promised, coordinates with indices in the same part of $\psi$ are grouped together so that they all get same value. It is also clear that this is an embedding.

It remains to be seen that for each $i$, $L_i$ conforms $M$. Fix $i\in[k]$, and let $a\|b$ be elements of $X$ so that $a<b$ in $M$. Consider the hypercube
\[
H=\{\chi_1(a),\chi_1(b)\}\times\cdots\times\{\chi_t(a),\chi_t(b)\},
\]
and the bit string
\[
d=[\chi_1(a)>\chi_1(b)]\ldots[\chi_t(a)>\chi_t(b)].
\]
(Here we used the Iverson bracket notation.)

We know that at least the first $k$ digits of $d$ are $0$, in particular the $i$th digit is $0$. Since the antipodal points corresponding to $d$ and $d'$ (the complement of $d$) in $H$ are $a$ and $b$, respectively, and since $H$ was colored GG\ldots G, it shows that $a<b$ in each of $L_1,\ldots,L_k$. This finishes the proof.
\end{proof}

It may be tempting to attempt to generalize this theorem further. After all, it may seem that if we have $k$ linear extensions of $X$, say $M_1,\ldots,M_k$, most of the proof still goes through. One may think that if we perform the embedding at the end carefully, we could make $L_i$ conform with $M_i$ for all $i=1,\ldots,k$.

This, however, is not the case, and there is a very simple counterexample. Just choose any poset $X$ that has at least two fundamentally different linear extensions $M_1,M_2$; we will make $k=2$. Then, if an appropriate $Y$ exists, we pick $L_1=L_2$. Clearly, we cannot have both $L_1$ conforming with $M_1$, and $L_2$ conforming with $M_2$.

\section{Acknowledgment}

We express our gratitude toward the anonymous referees for the careful reading and valuable suggestions on how to improve this paper.

\end{document}